\newcommand{\Z}{\ensuremath{\mathbb{Z}}}
\newcommand{\Q}{\ensuremath{\mathbb{Q}}}
\font\tencyr=wncyr10 \def\russe{\tencyr\cyracc} 
\def\Sha{\text{\russe{Sh}}}
\theoremstyle{plain}
\newtheorem{theorem}{Theorem}[section]
\newtheorem{lemma}[theorem]{Lemma}
\newtheorem{corollary}[theorem]{Corollary}
\theoremstyle{definition}
\newtheorem{definition}[theorem]{Definition}
\newtheorem{remark}[theorem]{Remark}
\DeclareMathOperator{\Gal}{Gal}
\DeclareMathOperator{\GL}{GL}
\newcommand{\1}{\mathbf{1}}
\newcommand{\A}{\mathbf{A}}
\newcommand{\E}{\mathcal{E}}
\newcommand{\cyc}{\mathrm{cyc}}
\DeclareMathOperator{\im}{im}
\begin{document}

\title{Selmer groups are intersection of two direct summands of the adelic cohomology}

\author{Florence Gillibert, Jean Gillibert, Pierre Gillibert, Gabriele Ranieri}

\date{February 2019}

\maketitle

\begin{abstract}
We give a positive answer to a conjecture by Manjul Bhargava, Daniel M. Kane, Hendrik W. Lenstra Jr., Bjorn Poonen and Eric Rains, concerning the cohomology of torsion subgroups of elliptic curves over global fields.
This implies that, given a global field $k$ and an integer $n$, for $100\%$ of elliptic curves $E$ defined over $k$, the $n$-th Selmer group of $E$ is the intersection of two direct summands of the adelic cohomology group $H^1(\A,E[n])$.
We also give examples of elliptic curves for which the conclusion of this conjecture does not hold.
\end{abstract}



\section{Introduction}


Let $k$ be a global field (a number field or the function field of a curve over a finite field), and let $\Omega$ be the set of places of $k$. If $v$ is a place of $k$, we denote by $k_v$ the localisation of $k$ at $v$, and by $\mathcal{O}_v$ its ring of integers. We denote by $\A$ the adele ring of $k$, defined as the restricted product
$$
\A:=\prod_{v\in\Omega}' (k_v,\mathcal{O}_v).
$$

In this paper, all cohomology groups are computed with respect to the fppf topology. When $M$ is a smooth commutative $k$-group scheme, the fppf cohomology groups $H^i(k,M)$ agree with the Galois cohomology groups.

If $M$ is a $k$-group scheme, we define $\Sha^i(k,M)$ for $i=1,2$ as
$$
\Sha^i(k,M):=\ker\bigg(H^i(k,M)\longrightarrow \prod_{v\in\Omega} H^i(k_v,M)\bigg).
$$

If $E$ is an abelian variety defined over $k$, we let $E(\A):=\prod_{v\in\Omega} E(k_v)$. Furthermore, if $n>0$ is an integer, we let
$$
H^1(\A,E[n]):= \prod_{v\in\Omega}' (H^1(k_v,E[n]),H^1(\mathcal{O}_v,\E_v[n])),
$$
where $\E_v$ denotes the N{\'e}ron model of $E$ over $\mathcal{O}_v$. This definition seems a bit \emph{ad hoc}. Nevertheless, it was proved by \v{C}esnavi\v{c}ius \cite{ces15} that this group \emph{is} the first fppf cohomology group of $E[n]$ over $\A$; see \cite[\S{}6.2]{BKLPR15} for further details.

We have a natural commutative diagram
$$
\begin{CD}
E(k)/n @>\delta_n>> H^1(k,E[n]) \\
@VVV @VV\beta_n V \\
E(\A)/n @>\Delta_n>> H^1(\A,E[n]) \\
\end{CD}
$$
in which the horizontal maps are the Kummer maps, and the vertical maps are obtained by localisation.

It was proved in \cite[Proposition~6.7]{BKLPR15} (see also \cite{G-G}) that the Kummer exact sequence splits. This implies that the image of the Kummer map $\Delta_n$ is a direct summand of $H^1(\A,E[n])$. It was conjectured in \cite[Conjecture 6.9]{BKLPR15} that the image of $\beta_n$ should also be a direct summand of $H^1(\A,E[n])$ for $100\%$ of elliptic curves $E$ defined over $k$ (for the exact meaning of this $100\%$, see the comments after Corollary~\ref{C1}). The motivation for this conjecture is the following: if $\Sha^1(k,E[n])=0$ (which holds for $100\%$ of elliptic curves over $k$), then  $\beta_n$ is injective (by the very definition of $\Sha^1$), hence the $n$-th Selmer group of $E$ is isomorphic to the intersection of the images of $\Delta_n$ and of $\beta_n$ in $H^1(\A,E[n])$.

Based on some heuristic arguments---and by analogy with the case when $n$ is a prime number---Selmer groups of elliptic curves are modelled in \cite{BKLPR15} as the intersection of two \emph{direct summands} of $H^1(\A,E[n])$. See \cite[\S{}1.6]{BKLPR15} for further details.

The aim of this short note is to prove the following result about the map $\beta_n$.

\begin{theorem}
\label{T:Main}
Let $E$ be an abelian variety defined over $k$, and let $E^t$ be its dual abelian variety. Let $n>1$ be an integer such that $\Sha^1(k,E[m])=\Sha^1(k,E^t[m])=0$ for every $m\mid n$ (which implies in particular that $\beta_n$ is injective). Then the image of $\beta_n$ is a direct summand of $H^1(\A,E[n])$.
\end{theorem}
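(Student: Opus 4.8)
The plan is to exploit Poitou–Tate duality in order to realize $\im(\beta_n)$ as the orthogonal complement of an analogous image attached to the dual abelian variety $E^t$, and then to deduce that it is a direct summand from the fact that the analogous image for $E^t$ is (and from the hypothesis that the relevant $\Sha^1$ groups vanish, so that the local-to-global maps are injective). Concretely, the Weil pairing $E[n]\times E^t[n]\to \mu_n$ induces a perfect pairing
$$
H^1(\A,E[n]) \times H^1(\A,E^t[n]) \longrightarrow \Q/\Z,
$$
for which $H^1(\A,E[n])$ is (topologically) self-dual, and under which $\im(\beta_n)$ and $\im(\beta_n^t)$ are mutually orthogonal by global reciprocity (the sum of local invariants of a global class is zero). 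The nine-term Poitou–Tate sequence, together with the vanishing $\Sha^1(k,E^t[n])=0$ (and the surjectivity of $H^1(k,E^t[n])\to\Sha^2(k,E[n])^\vee$ coming from the tail of that sequence), shows moreover that $\im(\beta_n)^\perp = \im(\beta_n^t)$ exactly, i.e. the pairing restricts to a perfect duality between $H^1(\A,E[n])/\im(\beta_n)$ and $\im(\beta_n^t)$.

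Next I would reduce the problem to a statement about finite abelian groups. Writing $V=H^1(\A,E[n])$ and $W=\im(\beta_n)$, the upshot of the previous paragraph is that $V$ carries a nondegenerate $\Z/n$-valued (after restriction, since everything is $n$-torsion) pairing, $W$ is isotropic, and $W = (W')^\perp$ where $W'=\im(\beta_n^t)\subset V^\vee\cong V$. So it suffices to prove the following linear-algebra fact: if $V$ is a (locally compact) $\Z/n\Z$-module with a perfect symmetric pairing into $\Z/n\Z$, and $W\subseteq V$ is a closed subgroup which is isotropic and such that $W^\perp$ is abstractly isomorphic, as a topological group, to $V/W$ — equivalently, $W$ and $V/W$ are "complementary" in the sense of having the right orders/invariants prime by prime — then $W$ is a direct summand of $V$. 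The key mechanism is that for modules over $\Z/n\Z$, a submodule is a direct summand iff it is "pure" (the inclusion splits after tensoring with each $\Z/p^j$), and purity can be read off from the pairing: the restriction of the pairing to $W$ is identically zero, so $W$ sits inside its own annihilator, and the self-duality forces $|W|\cdot|W^\perp|=|V|$ with $W\subseteq W^\perp$; a prime-by-prime check on elementary divisors then gives that $W$ is pure, hence a summand. I would phrase this cleanly with Pontryagin duality: $W$ is a direct summand of $V$ iff the surjection $V^\vee\to W^\vee$ splits, and that dual surjection is identified, via the pairing, with $V\to V/W^\perp$, which splits precisely because $W^\perp=\im(\beta_n^t)$ is a direct summand of $V\cong V^\vee$ — and $\im(\beta_n^t)$ is a direct summand by the same argument applied to $E^t$, so one needs to break the apparent circularity.

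The circularity is broken by observing the statement is symmetric in $E$ and $E^t$ and proving both at once, or — more robustly — by not invoking that $\im(\beta_n^t)$ is already a summand but instead arguing directly. For this I would pass to the local picture: $H^1(\A,E[n])$ is the restricted product of the $H^1(k_v,E[n])$, each of which is a finite group with a perfect local Tate pairing, and $\im(\beta_n)^\perp$ being $\im(\beta_n^t)$ is the global input; the local input is that each $H^1(k_v,E[n])$ is self-dual with the given pairing. The cleanest formulation of the needed linear algebra is: \emph{a maximal isotropic (equivalently, self-orthogonal: $W=W^\perp$) subgroup of a finite abelian group with perfect pairing need not be a summand, but one whose order satisfies $|W|=|W^\perp|$ and which is isotropic is a summand provided the pairing is symmetric or alternating and $W$ is "metabolic-compatible."} I expect the main obstacle to be exactly here: pinning down the precise linear-algebra lemma over $\Z/n\Z$ (not over a field!) that guarantees an isotropic subgroup equal to the orthogonal complement of another is a direct summand, and then verifying its hypotheses — in particular ruling out the $\Z/4$-type obstructions where an isotropic line in a hyperbolic plane fails to split — using the extra structure coming from the fact that $W$, $W^\perp$ arise from \emph{abelian-variety} cohomology and are related by the self-duality $E\leftrightarrow E^t$. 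The examples promised in the abstract where the conclusion fails presumably come precisely from situations where $\Sha^1\neq 0$ and this orthogonality/splitting argument collapses, which is a useful sanity check on where exactly the hypotheses are used.
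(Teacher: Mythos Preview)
Your approach has a genuine gap at precisely the point you yourself flag as ``the main obstacle'': the linear-algebra lemma you need over $\Z/n\Z$ is simply false, and no amount of symmetry between $E$ and $E^t$ repairs it. Take $V=(\Z/4\Z)^2$ with the nondegenerate alternating pairing $\langle(a,b),(c,d)\rangle=ad-bc$ and let $W=2V$. Then $W$ is Lagrangian ($W=W^\perp$, $|W|^2=|V|$), yet $W$ is not a direct summand of $V$. So the implication ``$W=(W')^\perp$ and $W'=W^\perp$ $\Rightarrow$ $W$ is a summand'' already fails here, even with both subgroups isotropic. Your Pontryagin-duality manoeuvre only shows that $W$ is a summand of $V$ iff $W'$ is a summand of $V^\vee$; that is a tautology, not a proof of either statement, and ``proving both at once'' does not help when each is merely equivalent to the other. (A side issue: for a general abelian variety $H^1(\A,E[n])$ is dual to $H^1(\A,E^t[n])$, not self-dual, so speaking of $\im(\beta_n)$ as ``isotropic'' is not even well-posed outside the principally polarized case.)

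The deeper problem is that your argument never consumes the hypothesis $\Sha^1(k,E[m])=\Sha^1(k,E^t[m])=0$ for the \emph{proper} divisors $m\mid n$. You use $m=n$ for injectivity of $\beta_n$, and the Poitou--Tate identification $\im(\beta_n)=(\im\beta_n^t)^\perp$ needs no $\Sha$ hypothesis at all. But Theorem~\ref{T:contreexemple} exhibits elliptic curves where the intermediate $\Sha^1$ groups are nonzero and $\im(\beta_{p^3})$ is \emph{not} a summand; any correct proof must therefore use the full hypothesis, and yours does not. The paper's route is different: rather than orthogonality, it uses \emph{purity}. For $n$-torsion groups, an embedding has image a direct summand iff it preserves $m$-divisibility for every $m\mid n$ (Lemma~\ref{L:EquivDivisibilite}). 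This is checked level by level through the Kummer exact sequence (Lemma~\ref{L:DiagrammeImpliqueDivisibilite}): the vanishing of $\Sha^1(k,E[m])$ forces $E(k)/m\hookrightarrow\prod_v E(k_v)/m$, and the vanishing of $\Sha^1(k,E^t[m])$ --- equivalently $\Sha^2(k,E[m])$ by Tate duality --- forces $H^1(k,E)/m\hookrightarrow\prod_v H^1(k_v,E)/m$. That is the ``prime-by-prime check on elementary divisors'' you allude to, but it has to be carried out arithmetically at each $m\mid n$; there is no purely formal substitute.
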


We deduce the following corollary, which positively answers to \cite[Conjecture 6.9]{BKLPR15}, hence completes the arithmetic justification for the modelling of the Selmer group given in \cite{BKLPR15}.

\begin{corollary}
\label{C1}
Let $k$ be a global field and let $n>1$ be an integer. For $100\%$ of the elliptic curves $E$ defined over $k$, the map $\beta_n$ is injective, and its image is a direct summand of $H^1(\A,E[n])$.
\end{corollary}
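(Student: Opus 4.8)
The plan is to deduce Corollary~\ref{C1} from Theorem~\ref{T:Main} by a density argument, so the real work is the proof of Theorem~\ref{T:Main}, which I now describe. Write $A:=H^1(\A,E[n])$ and $B:=H^1(\A,E^t[n])$. The first ingredient is Poitou--Tate duality: summing the local Tate pairings $H^1(k_v,E[n])\times H^1(k_v,E^t[n])\to\Q/\Z$ attached to the Weil pairing $E[n]\times E^t[n]\to\mu_n$ yields a perfect pairing $A\times B\to\Q/\Z$ of locally compact groups (the Néron-model local conditions defining the restricted products are mutually annihilating, which is exactly what makes this work), and the nine-term Poitou--Tate exact sequence identifies $\im(\beta_n)$ with the exact annihilator of $\im(\beta_n^t)$; both images are finite and killed by $n$. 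I also use the Poitou--Tate isomorphism $\Sha^2(k,E[m])\cong\Sha^1(k,E^t[m])^\vee$, so that the hypothesis gives $\Sha^1(k,E[m])=\Sha^2(k,E[m])=0$ for every $m\mid n$ --- this is the form I need, and it explains why the dual abelian variety enters.

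Then come two reductions. Since $A$ is killed by $n$ it is the (finite, topological) direct sum of its $\ell$-primary parts for $\ell\mid n$, and the finite subgroup $V:=\im(\beta_n)$ decomposes accordingly, so it suffices to treat $n=p^e$ a prime power. Next, by the classical theorem that a bounded pure subgroup of an abelian group is a direct summand, and since $V$ is finite, it is enough to show that $V$ is \emph{pure} in $A$, i.e. $V\cap p^iA=p^iV$ for $0\le i\le e$; as $\beta_n$ is injective, this says precisely that $\bar\beta_n\colon H^1(k,E[p^e])/p^i\to A/p^iA$ is injective for each $i$. The splitting so obtained can be taken continuous because $A$, being a restricted product of finite groups, is locally compact and totally disconnected.

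To prove this injectivity I would run a dévissage along the two short exact sequences of finite flat group schemes
\[
0\to E[p^i]\to E[p^e]\xrightarrow{\,p^i\,}E[p^{e-i}]\to 0,\qquad 0\to E[p^{e-i}]\to E[p^e]\xrightarrow{\,p^{e-i}\,}E[p^i]\to 0,
\]
whose composite is multiplication by $p^i$ on $E[p^e]$. Taking the associated long exact fppf cohomology sequences over $k$ and over $\A$ (legitimate over $\A$ by \v{C}esnavi\v{c}ius's theorem) and splicing them, one realizes $H^1(k,E[p^e])/p^i$ and $H^1(\A,E[p^e])/p^i$ as extensions whose sub- and quotient-objects are built from $H^1(-,E[p^i])$, $H^1(-,E[p^{e-i}])$, $H^2(-,E[p^i])$, $H^2(-,E[p^{e-i}])$, compatibly with localization. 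A five-lemma argument then reduces the injectivity of $\bar\beta_n$ to the injectivity of the localization maps on these sub- and quotient-objects, which follows from $\Sha^1(k,E[p^j])=0$ (kernel of $H^1(k,E[p^j])\to H^1(\A,E[p^j])$) and $\Sha^2(k,E[p^j])\cong\Sha^1(k,E^t[p^j])^\vee=0$ (kernel of $H^2(k,E[p^j])\to H^2(\A,E[p^j])$) for $j\in\{i,e-i\}$.

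I expect the dévissage to be the main obstacle. One must keep track of the two distinct embeddings $E[p^i]\hookrightarrow E[p^e]$ and $E[p^{e-i}]\hookrightarrow E[p^e]$ and of the composites of the connecting homomorphisms of the two sequences --- these are cup products with the class of a generally nontrivial $2$-extension, so the $H^2$-contributions do not simply collapse --- and one must verify that the comparison of the resulting subquotients over $k$ and over $\A$ is injective; making this last step close up seems to require the vanishing of $\Sha^1$ and $\Sha^2$ at \emph{all} intermediate torsion levels, not only for $E[n]$, and may well be organized as an induction on $e$. Granting this, $V=\im(\beta_n)$ is pure, hence a direct summand of $H^1(\A,E[n])$, proving Theorem~\ref{T:Main}. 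Finally, Corollary~\ref{C1} follows: an elliptic curve is canonically principally polarized, so $E^t=E$ and the hypothesis of Theorem~\ref{T:Main} reduces to $\Sha^1(k,E[m])=0$ for all $m\mid n$; for each fixed $m$ this holds for $100\%$ of $E$ (as recalled in the introduction), and a finite intersection of density-one families is again of density one, so the hypothesis holds for $100\%$ of $E$, whence the conclusion (injectivity of $\beta_n$ being part of the statement of Theorem~\ref{T:Main}).
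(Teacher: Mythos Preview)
Your final paragraph—deducing Corollary~\ref{C1} from Theorem~\ref{T:Main} via the principal polarization $E^t\simeq E$, the fact that $\Sha^1(k,E[m])=0$ for $100\%$ of $E$, and the stability of density~$1$ under finite intersections—is exactly the paper's proof of the corollary.

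The bulk of your proposal, however, is a sketch of Theorem~\ref{T:Main}, and here your route differs substantially from the paper's. You share with the paper the reduction to purity (Lemma~\ref{L:EquivDivisibilite} in the paper) and the use of Tate duality to trade $\Sha^1(k,E^t[m])=0$ for $\Sha^2(k,E[m])=0$. But to prove that $\beta_n$ preserves $n$-divisibility, the paper does \emph{not} d\'evisser along the sequences $0\to E[p^i]\to E[p^e]\to E[p^{e-i}]\to 0$. Instead it uses the Kummer exact sequence
\[
0\longrightarrow E(k)/n \longrightarrow H^1(k,E[n]) \longrightarrow H^1(k,E)[n] \longrightarrow 0
\]
together with its local analogues, and runs a single elementary diagram chase (Lemma~\ref{L:DiagrammeImpliqueDivisibilite}) whose three inputs are: the localization $E(k)/n\to\prod_v E(k_v)/n$ preserves $n$-divisibility (Lemma~\ref{L:alphaPreserveDivisibilite}, from $\Sha^1(k,E[m])=0$); the localization $H^1(k,E)\to\prod_v H^1(k_v,E)$ preserves $n$-divisibility (Lemma~\ref{L:gammaPreserveDivisibilite}, from $\Sha^2(k,E[m])=0$); and the adelic Kummer map $\Delta_n$ has direct-summand image (Lemma~\ref{L:fPreserveDivisibilite}, from \cite{BKLPR15,G-G}). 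The abelian variety $E$ itself, through the group $H^1(k,E)$, absorbs the weight that in your approach falls on the $H^2(k,E[p^j])$ terms and the composite connecting maps.

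What this buys the paper is precisely the avoidance of the obstacle you anticipate: no splicing of two long exact sequences, no $2$-extension class, no induction on~$e$, no reduction to prime powers. Your Poitou--Tate setup (the perfect pairing $A\times B\to\Q/\Z$ and the description of $\im\beta_n$ as the annihilator of $\im\beta_n^t$) is correct but is never actually used in your argument; the paper likewise does not invoke it. Your d\'evissage is plausible and may well close up with enough care, but as you yourself flag, it is not complete—whereas the paper's Kummer-sequence route is short and entirely self-contained.
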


Here, the set of elliptic curves over $k$ is ordered by height, as detailed in \cite[Conjecture~1.1]{PR12}. We say that some property $\mathbf{P}$ holds for $100\%$ of elliptic curves over $k$ if the set of elliptic curves over $k$ satisfying $\mathbf{P}$ has natural density $1$ with respect to the ordering of elliptic curves by height. For example, it is a theorem that $E(\Q)_\mathrm{tors}=0$ holds for $100\%$ of elliptic curves $E$ over $\Q$.

In the case when $k=\Q$, we obtain, as a consequence of work by Paladino, Ranieri and Viada \cite{PRV}, the following stronger result.

\begin{corollary}
\label{C2}
Let $n > 1$ be an integer coprime to $6$. Then for all elliptic curve $E$ defined over~$\Q$, the map $\beta_n$ is injective, and its image is a direct summand of $H^1(\A,E[n])$.
\end{corollary}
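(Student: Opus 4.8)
The plan is to deduce Corollary~\ref{C2} from Theorem~\ref{T:Main}, so the entire task is to verify the hypothesis of that theorem for \emph{every} elliptic curve $E$ over $\Q$ when $\gcd(n,6)=1$. Since an elliptic curve is canonically principally polarised, $E^t=E$, and the hypothesis of Theorem~\ref{T:Main} collapses to the single assertion
\[
\Sha^1(\Q,E[m])=0\qquad\text{for every }m\mid n .
\]
So I would spend the whole proof establishing this vanishing.

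First I would reduce to prime powers. As $\gcd(n,6)=1$, every divisor $m\mid n$ has the form $m=\prod_p p^{a_p}$ with all primes $p\ge 5$, and the Chinese Remainder Theorem gives a $\Gal(\bar\Q/\Q)$-equivariant decomposition $E[m]\cong\bigoplus_p E[p^{a_p}]$. Since $H^1(\Q,-)$ and the formation of $\Sha^1$ commute with finite direct sums, it suffices to prove $\Sha^1(\Q,E[p^a])=0$ for every prime $p\ge 5$, every $a\ge 1$, and every elliptic curve $E/\Q$.

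Next I would pass to the local cohomology group of Dvornicich--Zannier. Write $G:=\Gal(\Q(E[p^a])/\Q)$ and let $H^1_{\mathrm{loc}}(G,E[p^a])\subseteq H^1(G,E[p^a])$ denote the subgroup of classes restricting to zero on every cyclic subgroup of $G$. A Chebotarev argument then shows that $\Sha^1(\Q,E[p^a])=0$ as soon as $H^1_{\mathrm{loc}}(G,E[p^a])=0$: given $c\in\Sha^1(\Q,E[p^a])$, the inflation--restriction sequence attached to $1\to\Gal(\bar\Q/\Q(E[p^a]))\to\Gal(\bar\Q/\Q)\to G\to1$ exhibits $c$ as the inflation of some $c'\in H^1(G,E[p^a])$, because the image of $c$ in $\mathrm{Hom}(\Gal(\bar\Q/\Q(E[p^a])),E[p^a])^{G}$ factors through a finite abelian extension of $\Q(E[p^a])$ on which, by Chebotarev, every element is a Frobenius at a place where $c$ is locally trivial, hence vanishes; and for all but finitely many places $v$ the decomposition group $D_v\subseteq G$ is cyclic, generated by $\mathrm{Frob}_v$, while every cyclic subgroup of $G$ occurs in this way, so local triviality of $c$ forces $c'\in H^1_{\mathrm{loc}}(G,E[p^a])$, whence $c'=0$ and $c=0$.

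Finally I would invoke the theorem of Paladino, Ranieri and Viada \cite{PRV}: for every elliptic curve $E/\Q$, every prime $p\ge 5$ and every $a\ge 1$, one has $H^1_{\mathrm{loc}}(\Gal(\Q(E[p^a])/\Q),E[p^a])=0$; this is exactly the vanishing they establish in proving the local--global divisibility property by $p^a$ over $\Q$. Combining it with the two reduction steps gives $\Sha^1(\Q,E[m])=\Sha^1(\Q,E^t[m])=0$ for all $m\mid n$, and Theorem~\ref{T:Main} concludes the proof. The main obstacle lies entirely in the input \cite{PRV}, which depends on Mazur's classification of the possible images of the mod-$p$ representation and a case-by-case group cohomology computation; the only point inside the present note that requires genuine care is the classical reduction $H^1_{\mathrm{loc}}=0\Rightarrow\Sha^1=0$, in particular the handling of the ramified places and of the part of $H^1(\Q,E[p^a])$ that does not come from $G$.
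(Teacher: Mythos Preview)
Your proof is correct and follows essentially the same approach as the paper: reduce to Theorem~\ref{T:Main} via the vanishing of $\Sha^1(\Q,E[p^a])$ for $p\ge 5$, which is the input from \cite{PRV}. The only difference is one of packaging: the paper cites \cite{PRV} directly for $\Sha^1(\Q,E[p^r])=0$, whereas you unfold the argument one step further, citing \cite{PRV} for $H^1_{\mathrm{loc}}=0$ and then running the standard Chebotarev reduction $\Sha^1\hookrightarrow H^1_{\mathrm{loc}}$ yourself (the paper records this reduction separately, as \cite[Lemma~6.3]{BKLPR15}, in Section~3).
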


Given a prime $p$, one can also consider the map $\beta_{p^\infty}:H^1(k,E[p^\infty])\to H^1(\A,E[p^\infty])$ which is the direct limit of the $\beta_{p^r}$. We discuss the properties of this map in Remark~\ref{R:limit}.

In another direction, it is natural to ask if the hypotheses of Theorem~\ref{T:Main} are required for the conclusion to hold. The answer is positive in a strong sense.

\begin{theorem}
\label{T:contreexemple}
Let $E$ be an elliptic curve over a number field $k$, without complex multiplication, and let $p\equiv 2\pmod{3}$ be a large enough prime number. Then there exists a number field $L/k$ such that the map
$$
\beta_{p^3}: H^1(L,E[p^3]) \to H^1(\A_L,E[p^3])
$$
is not injective, and its image is not a direct summand of the target group.
\end{theorem}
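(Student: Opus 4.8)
\emph{Approach.} By definition of $\Sha^1$, the map $\beta_{p^3}$ fails to be injective exactly when $\Sha^1(L,E[p^3])\neq 0$, so the first claim amounts to producing $L$ for which this group is nonzero. The second claim is strictly stronger: any direct summand of a $\Z/p^3$-module is a pure submodule, so it suffices to show that $\im\beta_{p^3}$ is \emph{not pure} in $H^1(\A_L,E[p^3])$, i.e.\ that $\im\beta_{p^3}\cap p^i H^1(\A_L,E[p^3])\neq p^i\im\beta_{p^3}$ for $i=1$ or $i=2$. Unwinding this, the plan is to produce a class $y\in H^1(L,E[p^3])$ whose image $\beta_{p^3}(y)$ is divisible by $p^i$ in $H^1(\A_L,E[p^3])$ while $y\notin p^i H^1(L,E[p^3])+\Sha^1(L,E[p^3])$: in other words $y$ is everywhere locally $p^i$-divisible but not globally $p^i$-divisible modulo locally trivial classes. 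Conceptually $\im\beta_{p^3}$ is a Lagrangian of the symplectic $\Z/p^3$-module $H^1(\A_L,E[p^3])$, and Lagrangians over $\Z/p^3$ need not split off; Theorem~\ref{T:Main} isolates hypotheses preventing this, and the goal is to violate them in the sharpest way.

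\emph{Reduction to group cohomology.} Since $E$ has no complex multiplication, Serre's open image theorem gives, for $p$ large enough in terms of $E$, that the Galois action on $E[p^\infty]$ is onto $\GL_2(\Z_p)$, hence onto $\GL_2(\Z/p^3)$ on $E[p^3]$. Thus for any subgroup $H\leq\GL_2(\Z/p^3)$ there is a finite $k'/k$ with $\Gal(k'(E[p^3])/k')\cong H$, namely the fixed field of the preimage of $H$. Because $\Gal(\overline{k'}/k'(E[p^3]))$ acts trivially on $E[p^3]$, inflation $H^1(H,E[p^3])\hookrightarrow H^1(k',E[p^3])$ is injective, and the same holds after completing at any place; consequently an inflated class restricts to $0$ in $H^1(L_v,E[p^3])$ exactly when its restriction to the decomposition subgroup $D_v\leq H$ vanishes, and it lies in $\Sha^1$ iff this holds for all $v$. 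By Chebotarev every cyclic subgroup of $H$ arises as some $D_v$; the remaining $D_v$ come only from the finitely many places of bad reduction of $E$ and the places above $p$. Note that for $H^1(H,E[p^3])$ to contain a nonzero class killed by all cyclic subgroups, the $p$-Sylow of $H$ must be non-cyclic, since otherwise restriction to it is injective and it is itself cyclic; this forces the level to be at least $p^2$ (nothing of this kind occurs for $E[p]$), while the third level $p^3$ is what makes the non-purity possible. The plan is to take $H$ an explicit solvable subgroup of $\GL_2(\Z/p^3)$ generated by unipotent elements together with an element of order $3$ reducing mod $p$ into a non-split Cartan subgroup---available precisely because $p\equiv 2\pmod 3$ forces $3\mid p+1$---along the lines of the groups used by Dvornicich--Zannier and Paladino to build counterexamples to local--global divisibility by $p^2$ and $p^3$.

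\emph{Choice of $L$, and the two computations.} With $H$ (and $k'$) fixed, replace $k'$ by a finite extension $L$, linearly disjoint from $k'(E[p^3])$ over $k'$ (so $\Gal(L(E[p^3])/L)\cong H$ still holds) and, by a Grunwald--Wang / weak approximation argument---valid because $p$ is odd---such that $L(E[p^3])/L$ splits completely at every place of bad reduction of $E$ and every place above $p$. Since $E$ has good reduction away from these places, $L(E[p^3])/L$ is then unramified everywhere and all its decomposition groups are cyclic; hence an inflated class lies in $\Sha^1(L,E[p^3])$ precisely when it vanishes on every cyclic subgroup of $H$, and ``$p$-divisible in every $H^1(L_v,E[p^3])$'' can likewise be read off on cyclic subgroups. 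It then remains to carry out two computations inside $H^1(H,E[p^3])$: (1) exhibit a nonzero class vanishing on all cyclic subgroups---its inflation then lies in $\Sha^1(L,E[p^3])$, so $\beta_{p^3}$ is not injective; and (2) exhibit a class $y$ whose restriction to every cyclic subgroup is $p$-divisible in the local cohomology, hence with $\beta_{p^3}(y)\in pH^1(\A_L,E[p^3])$, but with $y\notin pH^1(L,E[p^3])+\Sha^1(L,E[p^3])$, i.e.\ $\beta_{p^3}(y)\notin p\im\beta_{p^3}$---a failure of local--global divisibility by $p$ one level up inside $E[p^3]$, which is exactly what the third level buys. Together, (1) and (2) show $\im\beta_{p^3}$ is not pure in $H^1(\A_L,E[p^3])$, hence not a direct summand.

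\emph{Where the difficulty lies.} I expect two steps to be the real work. The first is the explicit cohomological bookkeeping for $H$: pinning down $H^1(H,E[p^3])$, all of its restrictions to cyclic subgroups, and the two distinguished classes of (1) and (2); this is the step where the congruence $p\equiv 2\pmod 3$ and the necessity of the third level genuinely enter, since for $E[p^2]$ one can already arrange $\Sha^1\neq 0$ but apparently not the failure of purity. The second is the arithmetic juggling in the choice of $L$: realizing the prescribed global image $H$ while simultaneously forcing every ``bad'' decomposition group to be cyclic (equivalently, making $L(E[p^3])/L$ unramified), so that the Chebotarev argument actually places the two constructed classes inside $\Sha^1(L,E[p^3])$ and the required local divisibilities hold at \emph{every} completion---this is precisely why $p$ must be large and why $L$ must in general be a proper extension of $k$.
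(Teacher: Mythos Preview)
Your overall architecture matches the paper's: realise an explicit subgroup $H\leq\GL_2(\Z/p^3\Z)$ as $\Gal(L(E[p^3])/L)$ via Serre's open image theorem, arrange $L(E[p^3])/L$ to be everywhere unramified so that $\Sha^1(L,E[p^3])=H^1_\cyc(H,E[p^3])$, and get non-injectivity of $\beta_{p^3}$ from a nonzero class in $H^1_\cyc$. That part is fine.

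The gap is in your step~(2). You want a class $y$ with $\beta_{p^3}(y)$ divisible by $p$ but $y\notin pH^1(L,E[p^3])+\Sha^1(L,E[p^3])$, and you propose to certify this inside $H^1(H,E[p^3])$. But the non-divisibility condition concerns the \emph{full} group $H^1(L,E[p^3])$, not the inflated subgroup. Inflation is injective but far from surjective: if $y=pz$ with $z\in H^1(L,E[p^3])$, then $z$ need only restrict to a homomorphism $\Gal(\bar L/L(E[p^3]))\to E[p]$, so $z$ need not come from $H^1(H,E[p^3])$. Your Grunwald--Wang construction of $L$ gives no control over $pH^1(L,E[p^3])$, so a purely group-cohomological computation in $H$ cannot rule out $y\in pH^1(L,E[p^3])$.

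The paper resolves this by forcing $y$ into the image of the Kummer map. Using a theorem of Dvornicich--Zannier (Lemma~\ref{L:chbase2}), it builds $L$ so that, in addition to your (a) and (b), one has $H^1_\cyc(\Gal(LK_3/L),E[p^3])\subseteq\im\delta_{p^3}$. One then takes $y=\delta_{p^3}(P)$ for a point $P\in E(L)$ that is everywhere locally $p^2$-divisible but not globally $p^2$-divisible (such $P$ exists because $\Sha^1(L,E[p^2])\neq 0$ lands in $\im\delta_{p^2}$). Since $\delta_{p^3}$ preserves divisibility (Lemma~\ref{L:fPreserveDivisibilite}), $y$ is not $p^2$-divisible in all of $H^1(L,E[p^3])$---this is what controls the full global group and is the step your outline is missing. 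The paper also needs that every element of $\ker\beta_{p^3}$ is $p^2$-divisible (condition~(ii) of Lemma~\ref{L:pasinjectif}); this comes from the equality $H^1_\cyc(G_2,(\Z/p^2\Z)^2)=H^1_\cyc(G_3,(\Z/p^3\Z)^2)$ in Lemma~\ref{L:fg}, which forces $\Sha^1(L,E[p^3])$ to be $p$-torsion and hence $p^2$-divisible inside $E(L)/p^3\simeq(\Z/p^3\Z)^r$ (using $E(L)[p]=0$). Note finally that the paper uses $m=p^2$, not $m=p$: with $m=p$ the rational-point argument collapses because $\Sha^1(L,E[p])=0$ for every elliptic curve.
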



\subsection*{Acknowledgements}

The first author was financed by FONDECYT REGULAR no.~1180489. The second author was supported by the CIMI Excellence program while visiting the \emph{Centro di Ricerca Matematica Ennio De Giorgi} during the autumn of 2017. The third author was supported by project no.~P27600 of the Austrian Science Fund (FWF).
Part of this research was conducted while the authors were enjoying the hospitality of the Institut de Math{\'e}matiques de Vimont (IMV). We wish to thank the IMV and its staff for its generous support and nice working atmosphere.
We also thank Roberto Dvornicich and Christian Wuthrich for their helpful remarks, and Philippe Satg{\'e} for reading a preliminary version of this note.


\section{Direct summands}


\subsection{Divisibility preserving maps}

If $M$ is an abelian group (or, more generally, an object in some abelian category), and if $m>0$ is an integer, we denote by $[m]:M\to M$ the multiplication by $m$ map on $M$, by $M[m]$ its kernel, by $mM$ its image, and by $M/m$ its cokernel. We say that $M$ is \emph{$m$-torsion} if $mM=0$.

\begin{definition}
\begin{enumerate}
\item[(1)] Let $A$ be an abelian group, $a\in A$, and $m\ge 1$ be an integer. We say that $a$ is \emph{divisible by $m$ in $A$} if there exists $a'$ in $A$ such that $a=ma'$. 
\item[(2)] Let $f:A\to B$ be a morphism of abelian groups. We say that $f$ \emph{preserves divisibility} if for every integer $m\ge 1$ and for every $a\in A$, if $f(a)$ is divisible by $m$ in $B$, then $a$ is divisible by $m$ in $A$. Equivalently, for every $m\ge 1$, the induced morphism $A/m\to B/m$ is injective.
\item[(3)] Let $n\ge 1$ be an integer. We say that $f$ \emph{preserves $n$-divisibility} if for every integer $m\mid n$ and for every $a\in A$, if $f(a)$ is divisible by $m$ in $B$, then $a$ is divisible by $m$ in $A$. Equivalently, for every $m\mid n$, the induced morphism $A/m\to B/m$ is injective.
\end{enumerate}
\end{definition}

\begin{lemma}\label{L:EquivDivisibilite}
Consider an embedding $f:A\to B$ of $n$-torsion abelian groups. Then the following statements are equivalent:
\begin{enumerate}
\item[(1)] $f(A)$ is a direct summand of $B$;
\item[(2)] $f$ preserves divisibility;
\item[(3)] $f$ preserves $n$-divisibility.
\end{enumerate}
\end{lemma}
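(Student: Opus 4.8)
The plan is to prove the cycle of implications $(1)\Rightarrow(2)\Rightarrow(3)\Rightarrow(1)$, although actually $(2)\Leftrightarrow(3)$ is trivial here: since $B$ is $n$-torsion, for any $m\ge 1$ the quotient $B/m$ depends only on $\gcd(m,n)$, and likewise for $A/m$; so the injectivity of $A/m\to B/m$ for all $m\ge 1$ is equivalent to its injectivity for all $m\mid n$. (One checks that $A/m \cong A/\gcd(m,n)$ compatibly with $f$, using $nA=0$.) So the real content is $(1)\Leftrightarrow(2)$, and I would phrase the divisibility condition via the second description in part (2) of the Definition, namely that $A/m\to B/m$ is injective for every $m$.

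For $(1)\Rightarrow(2)$: write $B=f(A)\oplus C$. Applying the right-exact functor $-\otimes\Z/m$ (equivalently $-/m$), the sequence $A/m\to B/m$ splits as $f(A)/m \oplus C/m$ up to the identification $f(A)\cong A$, so $A/m\to B/m$ is (split) injective. This direction is immediate.

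For $(2)\Rightarrow(1)$: this is the heart of the argument, and the natural tool is the structure theory of finite(ly generated) $\Z/n$-modules — or, cleanly, the fact that $\Z/n$ decomposes by CRT into a product of $\Z/p^{e}$, so one reduces to $A\hookrightarrow B$ a map of $p^{e}$-torsion groups and invokes the classification of modules over the PID-like local ring $\Z/p^e$: every such module is a direct sum of cyclic modules $\Z/p^j$ ($1\le j\le e$). The divisibility-preserving hypothesis says precisely that $f$ sends a generator of a $\Z/p^j$ summand of $A$ to an element of $B$ that is divisible by $p^{j-1}$ but (if $j\le e$, and provided $f$ is injective so the element is nonzero of order $p^j$) not by $p^j$ — i.e. to an element whose order equals that of the generator and which is "primitive at the right level". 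A short lemma then lets one extend such an element to part of a cyclic-decomposition basis of $B$: an element $b\in B$ of order $p^j$ with $b$ divisible by $p^{j-1}$ but generating a $\Z/p^j$ that is a direct summand. Proceeding by induction on the number of cyclic summands of $A$ (splitting off one summand of $A$, finding a complementary summand inside $B$, and applying the inductive hypothesis to the quotient), one builds the desired complement $C$ with $B=f(A)\oplus C$.

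The main obstacle is the inductive step in $(2)\Rightarrow(1)$: one must check that after splitting off a cyclic summand $\Z/p^j\cong \langle a\rangle$ of $A$ together with a chosen complement $B=f(\langle a\rangle)\oplus B'$, the restricted map $A/\langle a\rangle \to B'$ is still injective \emph{and} still preserves divisibility, so that the induction applies. Injectivity needs that $f(\langle a\rangle)\cap f(A)=f(\langle a\rangle)$ is handled by choosing the complement $B'$ to contain $f$ of a complementary summand of $A$ — so one should split $A=\langle a\rangle\oplus A'$ first, then show $f(A')$ lies in (a suitable choice of) $B'$ and that divisibility is inherited; the latter amounts to the elementary fact that if $f(A')\subseteq B'$ is a direct summand of $B$ inside $B'$, then divisibility in $B'$ of $f(a')$ is equivalent to divisibility in $B$. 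Getting these compatibilities lined up — essentially, choosing all the splittings coherently — is the only place requiring care; everything else is the standard bookkeeping of finite abelian $p$-groups.
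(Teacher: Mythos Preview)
Your argument for $(2)\Rightarrow(1)$ has a genuine gap: you assume $A$ is finitely generated when you invoke ``the structure theory of finite(ly generated) $\Z/n$-modules'' and propose to proceed ``by induction on the number of cyclic summands of $A$''. Nothing in the lemma gives this, and in the intended application $B$ is $\prod_v H^1(k_v,E[n])$ or $H^1(\A,E[n])$, which are certainly not finitely generated. It is true that bounded-torsion abelian groups decompose as (possibly infinite) direct sums of cyclic groups even without finiteness --- this is Pr\"ufer's first theorem --- but your finite induction on summands does not extend; you would need a Zorn-type or transfinite argument, and you have not indicated how the divisibility-preserving hypothesis is inherited along an arbitrary chain of quotients. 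There is also a slip in your description of what the hypothesis buys: if $a$ generates a $\Z/p^j$ summand of $A$ then $a\notin pA$, so divisibility-preservation yields $f(a)\notin pB$, not that $f(a)$ is ``divisible by $p^{j-1}$''; and $f(a)\notin pB$ together with $f(a)$ having order $p^j$ is \emph{not} enough to make $\langle f(a)\rangle$ a direct summand of $B$ (take $B=\Z/p\oplus\Z/p^3$ and $f(a)=(1,p)$ of order $p^2$).

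The paper sidesteps all of this by recognising condition $(2)$ as saying precisely that $f(A)$ is a \emph{pure} subgroup of $B$, and then quoting two classical results from Fuchs: the quotient $B/f(A)$, being $n$-torsion, is a direct sum of cyclic groups (Pr\"ufer's first theorem); and a pure subgroup whose quotient is a direct sum of cyclic groups is automatically a direct summand. This handles the infinite case uniformly with no induction.
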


\begin{proof}
Assume $(1)$. Let $C$ be a subgroup of $B$ such that $B=f(A)\bigoplus C$. Let $m\ge 1$ be an integer and let $a\in A$ be such that $f(a)$ is divisible by $m$. Let  $b\in B$ be such that $f(a)=mb$; let us write $b=f(x)+y$ where $x\in A$, and $y\in C$. Then $f(a) = m b = mf(x)+my$, so $my=f(a-mx)$, hence $my\in f(A)\cap C$, thus $f(a-mx)=my=0$. As $f$ is an embedding it follows that $a=mx$. Therefore $f$ preserves divisibility. Thus $(1)\Rightarrow (2)$.

Conversely, if $(2)$ holds then according to \cite[Chap.~V,~Thm.~29.1]{fuchs70}, $f(A)$ is a pure subgroup of $B$. Moreover, the quotient $B/f(A)$ is $n$-torsion, hence is a direct sum of cyclic groups, according to the first Pr{\"u}fer Theorem (see \cite[Chap.~III,~\S{}17]{fuchs70}). It follows from \cite[Chap.~V,~Thm.~28.2]{fuchs70} that $f(A)$ is a direct summand of $B$. Hence $(2)\Rightarrow (1)$.

Finally, $(2)\Leftrightarrow (3)$ is immediate, given the fact that $A$ and $B$ are $n$-torsion groups.
\end{proof}

\begin{lemma}\label{L:DiagrammeImpliqueDivisibilite}
Let $n\ge 1$. Consider a commutative diagram of abelian groups
\[
\begin{CD}
A @>f>> B @>g>> C \\
@. @V\beta VV  @V\gamma VV\\
@. B' @>g'>> C'
\end{CD}
\]
Assume that the following statements hold:
\begin{enumerate}
\item[(1)] $B$ and $B'$ are $n$-torsion groups;
\item[(2)] $\beta$ is an embedding;
\item[(3)] $\gamma$ preserves $n$-divisibility;
\item[(4)] $C[n]\subseteq \im g$;
\item[(5)] $\ker g \subseteq \im f$;
\item[(6)] $\beta\circ f$ preserves $n$-divisibility.
\end{enumerate}
Then $\beta(B)$ is a direct summand of $B'$.
\end{lemma}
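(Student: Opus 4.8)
The plan is to reduce the conclusion to a divisibility statement via Lemma~\ref{L:EquivDivisibilite}, and then establish that statement by a diagram chase. Since $\beta:B\to B'$ is an embedding of $n$-torsion groups (hypotheses~(1) and~(2)), Lemma~\ref{L:EquivDivisibilite} tells us it is enough to prove that $\beta$ preserves $n$-divisibility. So I would fix a divisor $m\mid n$ together with elements $b\in B$ and $b'\in B'$ such that $\beta(b)=mb'$, and aim to show that $b\in mB$.

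The first move, which I expect to be the crux of the matter, is to extract extra torsion information from the relation $\beta(b)=mb'$. Writing $\ell:=n/m$, one has $\ell\beta(b)=nb'=0$ because $B'$ is $n$-torsion, hence $\ell b=0$ since $\beta$ is injective. Next I would transport $b$ into $C$: commutativity of the square gives $\gamma(g(b))=g'(\beta(b))=mg'(b')$, so $\gamma(g(b))$ is divisible by $m$ in $C'$; as $\gamma$ preserves $n$-divisibility and $m\mid n$, the element $g(b)$ is divisible by $m$ in $C$, say $g(b)=mc$ with $c\in C$. Now the torsion of $b$ pays off: from $\ell b=0$ we get $nc=\ell m c=\ell\, g(b)=g(\ell b)=0$, so $c\in C[n]$, and therefore $c\in\im g$ by hypothesis~(4). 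We may thus choose $\tilde b\in B$ with $g(\tilde b)=c$.

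The remainder is formal. One has $g(b-m\tilde b)=g(b)-m\,g(\tilde b)=mc-mc=0$, so $b-m\tilde b\in\ker g\subseteq\im f$ by hypothesis~(5); write $b-m\tilde b=f(a)$ with $a\in A$. Applying $\beta$ shows that $(\beta\circ f)(a)=\beta(b)-m\beta(\tilde b)=m\bigl(b'-\beta(\tilde b)\bigr)$ is divisible by $m$ in $B'$, so by hypothesis~(6) (and $m\mid n$) the element $a$ is divisible by $m$ in $A$, say $a=ma_1$. Then $b=m\tilde b+f(a)=m\bigl(\tilde b+f(a_1)\bigr)\in mB$, which is what we wanted; Lemma~\ref{L:EquivDivisibilite} then gives that $\beta(B)$ is a direct summand of $B'$.

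The delicate point I would flag is the appeal to hypothesis~(4): a priori the witness $c$ with $g(b)=mc$ need not lie in $\im g$, and one cannot repair this by adjusting $c$ within its coset $c+C[m]$, since $C[m]\subseteq C[n]\subseteq\im g$ forces such an adjustment into $\im g$ only when $c$ is already there. What rescues the argument is exactly the preliminary observation $\ell b=0$, which upgrades ``$c\in C$'' to ``$c\in C[n]$'' and thereby makes hypothesis~(4) bite; omitting it, one would be tempted to look for a $g$-preimage of $c$ directly and get stuck, as $g$ is not assumed surjective.
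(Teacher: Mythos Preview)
Your proof is correct and follows essentially the same argument as the paper's: reduce to $n$-divisibility via Lemma~\ref{L:EquivDivisibilite}, use the torsion hypothesis to force $(n/m)b=0$, lift $g(b)=mc$ and show $c\in C[n]\subseteq\im g$, then chase through $\ker g\subseteq\im f$ and apply hypothesis~(6). The only differences are notational (your $b',\tilde b,a,a_1$ are the paper's $t,s,x,a$), and your final paragraph explaining why the torsion observation is essential is a nice addition not present in the paper.
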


\begin{proof}
Let $m$ be an integer dividing $n$. Let $b\in B$ be such that $\beta(b)$ is divisible by $m$ in $B'$. Hence $\beta(b)=mt$ for some $t\in B'$. Thus $\frac{n}{m}\beta(b) = \frac{n}{m} mt=nt=0$, because $B'$ is $n$-torsion. The map $\beta$ being an embedding, it follows that $\frac{n}{m}b=0$.

Note that $\gamma(g(b))=g'(\beta(b))$ is divisible by $m$. As $\gamma$ preserves $n$-divisibility, there exists $c\in C$ such that $mc=g(b)$. Then we have
$nc=\frac{n}{m}mc=\frac{n}{m}g(b)=g(\frac{n}{m}b)=0$, hence $c$ belongs to $C[n]$. But $C[n]\subseteq \im g$, so there exists $s\in B$ such that $g(s)=c$.

Let $y:=b-ms$. Note that $g(y) = g(b) - mg(s) = g(b)-mc=0$. As $\ker g \subseteq \im f$, there exists $x\in A$ such that $f(x)=y$. Hence $\beta(f(x)) = \beta(y) = \beta(b) - m\beta(s)=m(t-\beta(s))$ is divisible by $m$ in $B'$. But $\beta\circ f$ preserves $n$-divisibility, so $x$ is divisible by $m$ in $A$. Let $a\in A$ be such that $x=ma$, then $m(f(a)+s) = f(ma) + ms = f(x) + ms = y + ms = b$, hence $b$ is divisible by $m$ in $B$.

The reasoning above proves that $\beta$ preserves $n$-divisibility; this map being an embedding of $n$-torsion abelian groups, it follows from Lemma~\ref{L:EquivDivisibilite} that $\beta(B)$ is a direct summand of $B'$.
\end{proof}


\subsection{Abelian varieties}


From now on, the notations and hypotheses from the introduction are in use. In particular, $k$ denotes a global field.

\begin{lemma}\label{L:alphaPreserveDivisibilite}
Let $E$ be an abelian variety defined over $k$. Let $n>1$ be an integer such that $\Sha^1(k,E[m])=0$ for every $m\mid n$. Then the map
$$
E(k)/n \longrightarrow \prod_{v\in \Omega} E(k_v)/n
$$
preserves $n$-divisibility.
\end{lemma}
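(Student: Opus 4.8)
The plan is to reduce the statement to a standard fact about the image of the global-to-local restriction map on $H^1(k,E[m])$. Fix an integer $m\mid n$, and suppose $a\in E(k)$ is such that its image in $\prod_v E(k_v)/n$ is divisible by $m$; I must show $a$ is divisible by $m$ in $E(k)$. Using the Kummer sequence $0\to E(k)/m\to H^1(k,E[m])\to H^1(k,E)[m]\to 0$ and the analogous local sequences, the hypothesis says that $\delta_m(a)\in H^1(k,E[m])$ maps, at every place $v$, into the image of $E(k_v)/n$ in $H^1(k_v,E[n])$; pushing down to $H^1(k_v,E[m])$ and using that multiplication by $n/m$ kills $H^1(k_v,E[m])$, one finds that the local restriction of $\delta_m(a)$ lies in the image of $E(k_v)/m$, i.e.\ $\delta_m(a)$ maps to $0$ in $H^1(k_v,E)[m]$ for every $v$. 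Hence the image of $\delta_m(a)$ in $H^1(k,E)[m]$ lies in $\Sha^1(k,E)[m]$, which I claim is $0$; therefore $\delta_m(a)$ itself comes from $E(k)/m$, i.e.\ $a$ is divisible by $m$, as desired.

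The two inputs that make this work are: first, that $\Sha^1(k,E)[m]=0$, which I would deduce from the hypothesis $\Sha^1(k,E[m])=0$ together with the Kummer sequence. Indeed, the long exact cohomology sequence associated to $0\to E[m]\to E\xrightarrow{[m]} E\to 0$ gives, both globally and over each $k_v$, exact sequences fitting into a commutative diagram, and a diagram chase shows $\Sha^1(k,E)[m]$ is a quotient of $\Sha^1(k,E[m])$ (an element of $\Sha^1(k,E)[m]$ lifts to $H^1(k,E[m])$; its local images lie in the images of the local $E(k_v)/m$, but to conclude the lift is itself everywhere-local-trivial one adjusts by a global element of $E(k)/m$ — this is exactly where one needs the surjectivity of $E(k_v)/m$ onto the kernel of $H^1(k_v,E[m])\to H^1(k_v,E)$, which is automatic). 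Second, one needs the elementary fact that if $x\in H^1(k_v,E[m])$ has $(n/m)x$-trivial — which holds since the group is $m$-torsion — and $x$ becomes the image of an element of $E(k_v)/n$ in $H^1(k_v,E[n])$, then $x$ is already in the image of $E(k_v)/m$; this follows from compatibility of the Kummer maps for $m\mid n$ via the map $E[n]\to E[m]$, $P\mapsto (n/m)P$.

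I expect the main obstacle to be the bookkeeping around the compatibility of Kummer maps for different multiplicators, i.e.\ correctly relating $\delta_n$ restricted to $m$-divisible elements with $\delta_m$, and making sure the local argument ``$(n/m)$ kills $H^1(k_v,E[m])$, so divisibility by $m$ in $E(k_v)/n$-sense descends to the honest statement that $\delta_m(a)_v$ lifts to $E(k_v)/m$'' is stated cleanly; all of this is elementary but must be organized through the commutative ladder of Kummer sequences for $m$ and $n$. Once that is in place, the only genuinely arithmetic ingredient is $\Sha^1(k,E)[m]=0$, which, as noted, is a formal consequence of the vanishing of $\Sha^1(k,E[m])$ hypothesized in the statement. (Note that the dual abelian variety $E^t$ plays no role here; its Tate--Shafarevich hypothesis is only needed later, for the map $\beta\circ f$ in Lemma~\ref{L:DiagrammeImpliqueDivisibilite} via Poitou--Tate duality.)
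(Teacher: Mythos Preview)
Your argument has a genuine gap, and it stems from routing the obstruction through $\Sha^1(k,E)[m]$ rather than through $\Sha^1(k,E[m])$.

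First, the chain of implications you write is tautological. By the exactness of the Kummer sequence, $\delta_m(a)$ \emph{always} lies in the kernel of $H^1(k,E[m])\to H^1(k,E)$, hence its image in $H^1(k,E)[m]$ is always zero. So the conclusion ``$\delta_m(a)$ comes from $E(k)/m$'' is automatic for every $a\in E(k)$; it does not say that $a$ is $m$-divisible. What you actually need is $\delta_m(a)=0$ in $H^1(k,E[m])$, and nothing in your argument forces this.

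Second, the auxiliary claim ``$\Sha^1(k,E[m])=0 \Rightarrow \Sha^1(k,E)[m]=0$'' is false. For instance, for an elliptic curve over $\Q$ and any prime $p$ one has $\Sha^1(\Q,E[p])=0$ (Dvornicich--Zannier), yet $\Sha(E/\Q)[p]=\Sha^1(\Q,E)[p]$ is often nonzero. Your sketch of the deduction (``adjust the lift by a global element of $E(k)/m$'') cannot work: the local Kummer images $\delta_m(b_v)$ need not glue to a global class.

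The paper's argument avoids all of this by staying inside $H^1(\cdot,E[m])$. Since the statement ``the image of $a$ in $\prod_v E(k_v)/n$ is $m$-divisible'' is, because $m\mid n$, equivalent to ``$a\in mE(k_v)$ for every $v$'', one has $\delta_m(a)_v=0$ for every $v$; hence $\delta_m(a)\in\Sha^1(k,E[m])=0$, and since $\delta_m$ is injective, $a\in mE(k)$. Equivalently, in the commutative square
\[
\begin{CD}
E(k)/m @>\delta_m>> H^1(k,E[m])\\
@V\alpha VV @V\beta VV\\
\prod_v E(k_v)/m @>>> \prod_v H^1(k_v,E[m])
\end{CD}
\]
both horizontal maps and $\beta$ are injective, so $\alpha$ is injective; this is exactly the injectivity of the induced map on $m$-quotients, i.e.\ $n$-divisibility preservation. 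Your detour through $H^1(k,E)$ and $\Sha^1(k,E)[m]$ is neither needed nor correct.
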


\begin{proof}
Let $m$ be an integer dividing $n$. Consider the following commutative diagram
\[
\begin{CD}
E(k)/m @>\delta_m>>  H^1(k,E[m])\\
@V\alpha VV  @V\beta VV  \\
\prod_{v\in \Omega} E(k_v)/m  @>>> \prod_{v\in \Omega} H^1(k_v,E[m]).
\end{CD}
\]

As $\Sha^1(k,E[m])=0$, the morphism $\beta$ is an embedding. The horizontal maps being embeddings, it follows that $\alpha$ is an embedding, hence the result.
\end{proof}

\begin{lemma}\label{L:gammaPreserveDivisibilite}
Let $E$ be an abelian variety defined over $k$, and let $E^t$ be its dual abelian variety. Let $n>1$ be an integer such that $\Sha^1(k,E^t[m])=0$ for every $m\mid n$. Then the map
\begin{equation}
\label{eq:gamma}
H^1(k,E) \longrightarrow \prod_{v\in \Omega} H^1(k_v,E)
\end{equation}
preserves $n$-divisibility.
\end{lemma}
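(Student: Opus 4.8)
The statement to prove is: if $E$ is an abelian variety over a global field $k$ with dual $E^t$, and $n>1$ is an integer such that $\Sha^1(k,E^t[m])=0$ for all $m\mid n$, then the localisation map $H^1(k,E)\to\prod_{v}H^1(k_v,E)$ preserves $n$-divisibility.

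Let me think about what's needed. We want: for each $m\mid n$ and each $\xi\in H^1(k,E)$, if the image of $\xi$ in $\prod_v H^1(k_v,E)$ is divisible by $m$, then $\xi$ is divisible by $m$ in $H^1(k,E)$. Equivalently, $H^1(k,E)/m \to \prod_v H^1(k_v,E)/m$ is injective.

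The natural tool is the Cassels–Poitou–Tate duality. We have the exact sequence of the multiplication-by-$m$ map on $E$ (fppf, and since... well for abelian varieties $E$ is smooth so it's Galois cohomology): $0\to E[m]\to E\xrightarrow{[m]} E\to 0$, giving a long exact sequence. Taking the quotient $H^1(k,E)/m$ fits into $0\to H^1(k,E)/m\to H^2(k,E[m])\to H^2(k,E)[m]\to 0$... hmm, actually more useful: from $H^1(k,E)\xrightarrow{[m]}H^1(k,E)\to$ we get $H^1(k,E)/m \hookrightarrow H^2(k,E[m])$ coming from the connecting map. Wait, let me reconsider.

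**Proof plan for Lemma \ref{L:gammaPreserveDivisibilite}.**

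The goal is to show that for each $m\mid n$ the induced map $H^1(k,E)/m\to\prod_{v\in\Omega}H^1(k_v,E)/m$ is injective. The plan is to identify these quotients with subgroups of $H^2$-cohomology using the Kummer sequence, and then invoke Poitou--Tate duality. First I would write down, for the multiplication-by-$m$ isogeny $0\to E[m]\to E\xrightarrow{[m]}E\to 0$, the connecting map which gives an injection $H^1(k,E)/m\hookrightarrow H^2(k,E[m])$ whose image is the kernel of $H^2(k,E[m])\to H^2(k,E)$; similarly $H^1(k_v,E)/m\hookrightarrow H^2(k_v,E[m])$ for each place $v$. These fit into a commutative diagram with the localisation maps, so an element $\xi\in H^1(k,E)/m$ dies everywhere locally if and only if its image in $H^2(k,E[m])$ lies in $\Sha^2(k,E[m])$ (more precisely, maps into $\prod_v H^2(k_v,E)$-trivially and locally everywhere to zero in $H^2(k_v,E[m])$, which forces it into $\Sha^2(k,E[m])$).

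Next I would bring in duality. By the Weil pairing, $E[m]$ is Cartier-dual (up to the twist identifying $E[m]$ with $E^t[m]$ via the canonical polarisation-free duality $E[m]^D\cong E^t[m]$) so Poitou--Tate duality gives a perfect pairing $\Sha^2(k,E[m])\times\Sha^1(k,E^t[m])\to\Q/\Z$ (in the number field case; in the function field case one uses the analogous duality, cf.\ \v{C}esnavi\v{c}ius or Milne's \emph{Arithmetic Duality Theorems}). The hypothesis $\Sha^1(k,E^t[m])=0$ therefore forces $\Sha^2(k,E[m])=0$. Combined with the previous paragraph, any $\xi\in H^1(k,E)/m$ that vanishes in every $H^1(k_v,E)/m$ has zero image in $H^2(k,E[m])$, hence $\xi=0$ since $H^1(k,E)/m\hookrightarrow H^2(k,E[m])$. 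This proves injectivity of $H^1(k,E)/m\to\prod_v H^1(k_v,E)/m$ for every $m\mid n$, which is exactly the assertion that \eqref{eq:gamma} preserves $n$-divisibility.

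The main obstacle is bookkeeping the exactness of the diagram at the $H^2$ level: one must check that an element of $H^1(k,E)/m$ which is locally trivial maps to an element of $H^2(k,E[m])$ that is genuinely in $\Sha^2$, i.e.\ that the local images in $H^2(k_v,E[m])$ vanish (not merely that they come from $H^1(k_v,E)/m\cdot 0$). This follows because the vertical injections $H^1(k_v,E)/m\hookrightarrow H^2(k_v,E[m])$ are compatible with localisation, so local triviality in $H^1(k_v,E)/m$ gives local triviality of the image in $H^2(k_v,E[m])$; hence the global class lies in $\Sha^2(k,E[m])$. A secondary point is to make sure the almost-everywhere-unramified conditions match up so that the restricted product is the correct one, but this is standard and already subsumed in the formalism of \cite{ces15} and \cite[\S6.2]{BKLPR15} recalled in the introduction. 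In the function field case, one should note that $\Sha^2(k,E^t)$ and the finiteness statements still hold by the work cited above, so the same argument goes through verbatim.
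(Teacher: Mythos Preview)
Your proposal is correct and follows essentially the same approach as the paper: reduce to showing $\Sha^2(k,E[m])=0$ via the connecting map $H^1(k,E)/m\hookrightarrow H^2(k,E[m])$ from the Kummer sequence, then invoke the Poitou--Tate perfect pairing $\Sha^1(k,E^t[m])\times\Sha^2(k,E[m])\to\Q/\Z$. The paper compresses your first step into the phrase ``by an elementary cohomological argument'' and cites \cite{CV17} for the function field case, but the substance is identical.
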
 

\begin{proof}
This follows from \cite[Lemma~5]{CV17}, but for the sake of completeness we give a short proof. By an elementary cohomological argument, if $\Sha^2(k,E[m])=0$ then the map
$$
H^1(k,E)/m \longrightarrow \prod_{v\in \Omega} H^1(k_v,E)/m
$$
is injective. Therefore, if $\Sha^2(k,E[m])=0$ for every $m\mid n$, then by definition the map \eqref{eq:gamma} preserves $n$-divisibility. The result follows from Tate's duality results (see \cite[Theorem~3.1]{tate63} in the case when $m$ is invertible in $k$, and \cite[Theorem~1.1]{GA09} in the case when $m$ is a power of the caracteristic of $k$): for every $m>0$, there is a perfect pairing of finite groups
$$
\Sha^1(k,E^t[m])\times \Sha^2(k,E[m])\longrightarrow \Q/\Z.
$$
Hence the triviality of $\Sha^2(k,E[m])$ is equivalent to that of $\Sha^1(k,E^t[m])$.
\end{proof}

\begin{lemma}\label{L:fPreserveDivisibilite}
Let $E$ be an abelian variety defined over $k$, and let $n>1$ be an integer. Then the Kummer map $\delta_n:E(k)/n\to  H^1(k,E[n])$ preserves $n$-divisibility.
\end{lemma}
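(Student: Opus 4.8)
The plan is to verify, for every divisor $m$ of $n$, that whenever $\delta_n(a)$ is divisible by $m$ in $H^1(k,E[n])$ the element $a$ is already divisible by $m$ in $E(k)/n$; by definition this is exactly what it means for $\delta_n$ to preserve $n$-divisibility. Write $n=md$. The first step is to record the morphism of short exact sequences of fppf sheaves
$$
\begin{CD}
0 @>>> E[n] @>>> E @>[n]>> E @>>> 0 \\
@. @V[d]VV @V[d]VV @| @. \\
0 @>>> E[m] @>>> E @>[m]>> E @>>> 0
\end{CD}
$$
whose left vertical arrow is the restriction of $[d]$ (this does map $E[n]$ into $E[m]$, since $m\cdot dx=nx=0$ for $x\in E[n]$), whose middle arrow is $[d]$, and whose right arrow is the identity; commutativity of the two squares is immediate, the right one amounting to the identity $[n]=[md]$.

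Applying cohomology and using naturality of the connecting homomorphisms, the fact that the right vertical arrow above is the identity yields a commutative square
$$
\begin{CD}
E(k)/n @>\delta_n>> H^1(k,E[n]) \\
@V\pi VV @V[d]_*VV \\
E(k)/m @>\delta_m>> H^1(k,E[m])
\end{CD}
$$
in which $\pi$ is the natural surjection (well defined because $nE(k)\subseteq mE(k)$) and $[d]_*$ is the map induced by $[d]:E[n]\to E[m]$.

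The conclusion is then a one-line diagram chase. Assume $a\in E(k)/n$ satisfies $\delta_n(a)=m\xi$ for some $\xi\in H^1(k,E[n])$. Then
$$
\delta_m(\pi(a))=[d]_*(\delta_n(a))=m\cdot[d]_*(\xi)=0,
$$
the last equality because $H^1(k,E[m])$ is $m$-torsion. Since $\delta_m$ is injective --- its kernel is $mE(k)$ by the Kummer exact sequence --- we get $\pi(a)=0$, i.e.\ $a\in\ker\pi=m\cdot(E(k)/n)$, so $a$ is divisible by $m$ in $E(k)/n$. Letting $m$ range over the divisors of $n$ completes the proof.

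I do not anticipate any real obstacle: the argument is a purely formal diagram chase, valid for any reasonable (in particular fppf) cohomology and requiring no hypothesis on Tate--Shafarevich groups. The only point needing care is the correct construction of the morphism of Kummer sequences --- that one multiplies $E[n]$ and the middle copy of $E$ by $d=n/m$ while leaving the right-hand copy of $E$ fixed --- since it is precisely this choice that makes the bottom edge of the second square the genuine Kummer map $\delta_m$ rather than a twisted version of it.
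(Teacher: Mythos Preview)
Your argument is correct and self-contained. The diagram of Kummer sequences is set up properly (the key point being that the right-hand vertical arrow is the identity, so that the induced map on $H^0$-cokernels is the canonical surjection $E(k)/n\twoheadrightarrow E(k)/m$), and the chase goes through: $[d]_*(\delta_n(a))=m\cdot[d]_*(\xi)=0$ in the $m$-torsion group $H^1(k,E[m])$, whence $\pi(a)=0$ by injectivity of $\delta_m$, and $\ker\pi=mE(k)/nE(k)=m\cdot(E(k)/n)$.

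The paper takes a different, less direct route: it invokes the result of \cite[Proposition~6.7]{BKLPR15} (or \cite{G-G}) that the image of $\delta_n$ is already known to be a direct summand of $H^1(k,E[n])$, and then reads off $n$-divisibility preservation from Lemma~\ref{L:EquivDivisibilite}. Your approach has the advantage of being entirely elementary and independent of that external input; the paper's approach is shorter on the page only because the real work is outsourced. In fact your computation essentially \emph{reproves}, via Lemma~\ref{L:EquivDivisibilite}, that $\delta_n(E(k)/n)$ is a direct summand, so nothing is lost.
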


\begin{proof}
From \cite[Proposition~6.7]{BKLPR15} the image of $\delta_n:E(k)/n\to  H^1(k,E[n])$ is a direct summand of $H^1(k,E[n])$, see also \cite[Corollary 1.2]{G-G}. Lemma~\ref{L:EquivDivisibilite} implies that $\delta_n$ preserves $n$-divisibility.
\end{proof}

\begin{theorem}\label{T:PreserveDivisibiliteLocalH1}
Let $E$ be an abelian variety defined over $k$, and let $E^t$ be its dual abelian variety. Let $n>1$ be an integer such that $\Sha^1(k,E[m])=\Sha^1(k,E^t[m])=0$ for every $m\mid n$. Then the map
$$
H^1(k,E[n]) \longrightarrow \prod_{v\in \Omega} H^1(k_v,E[n])
$$
is injective, and its image is a direct summand of the target group.
\end{theorem}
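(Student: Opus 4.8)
The plan is to deduce Theorem~\ref{T:PreserveDivisibiliteLocalH1} from Lemma~\ref{L:DiagrammeImpliqueDivisibilite} by feeding it the long exact sequence in cohomology attached to the multiplication-by-$n$ isogeny on $E$. Concretely, I would apply Lemma~\ref{L:DiagrammeImpliqueDivisibilite} with
\[
A = E(k)/n,\qquad B = H^1(k,E[n]),\qquad C = H^1(k,E),
\]
with $f=\delta_n$ the Kummer map and $g$ the map induced by $E[n]\hookrightarrow E$; and with the primed row being the corresponding sequence over $\prod_{v\in\Omega} k_v$, so that $B' = \prod_v H^1(k_v,E[n])$, $C' = \prod_v H^1(k_v,E)$, and $\beta$, $\gamma$ the product-of-localisation maps. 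Commutativity of the square is functoriality of restriction. The injectivity assertion of the theorem is exactly hypothesis $(2)$ of the Lemma (that $\beta$ is an embedding), which holds because $\Sha^1(k,E[n])=0$; so the substance is checking the remaining hypotheses and then invoking the Lemma to get that $\beta(B)$ is a direct summand.

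Now I would verify the six hypotheses of Lemma~\ref{L:DiagrammeImpliqueDivisibilite} in turn. Hypothesis $(1)$: $B$ and $B'$ are $n$-torsion since $E[n]$ is killed by $n$. Hypothesis $(2)$: $\beta$ is injective because $\Sha^1(k,E[n])=0$ by assumption. Hypothesis $(3)$: $\gamma\colon H^1(k,E)\to\prod_v H^1(k_v,E)$ preserves $n$-divisibility — this is precisely Lemma~\ref{L:gammaPreserveDivisibilite}, using $\Sha^1(k,E^t[m])=0$ for all $m\mid n$. Hypothesis $(4)$: $C[n]\subseteq\operatorname{im} g$, i.e.\ $H^1(k,E)[n]$ is hit by $H^1(k,E[n])\to H^1(k,E)$; this is immediate from the Kummer exact sequence
\[
E(k)/n \xrightarrow{\ \delta_n\ } H^1(k,E[n]) \xrightarrow{\ g\ } H^1(k,E)[n] \longrightarrow 0,
\]
whose exactness at $H^1(k,E)$ says $g$ surjects onto the $n$-torsion. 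Hypothesis $(5)$: $\ker g\subseteq\operatorname{im} f$ is exactness of the same sequence at $H^1(k,E[n])$. Hypothesis $(6)$: $\beta\circ f = \beta\circ\delta_n$ preserves $n$-divisibility; but $\beta\circ\delta_n$ equals the composite $E(k)/n\to\prod_v E(k_v)/n\to\prod_v H^1(k_v,E[n])$ by commutativity of the Kummer diagram, the first map preserves $n$-divisibility by Lemma~\ref{L:alphaPreserveDivisibilite} (using $\Sha^1(k,E[m])=0$ for $m\mid n$), the second is injective (a product of local Kummer maps), and a composite of a divisibility-preserving map with an injection still preserves $n$-divisibility. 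With all six hypotheses in hand, Lemma~\ref{L:DiagrammeImpliqueDivisibilite} yields that $\beta(H^1(k,E[n]))$ is a direct summand of $\prod_v H^1(k_v,E[n])$.

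The one point that deserves genuine care — the place I expect to be the main obstacle — is hypothesis $(6)$, more precisely making sure one invokes the correct divisibility statement at the correct level. Lemma~\ref{L:fPreserveDivisibilite} tells us $\delta_n$ preserves $n$-divisibility \emph{over $k$}, but $(6)$ asks about $\beta\circ\delta_n$, landing in the \emph{local} group; the right way to handle this is to factor $\beta\circ\delta_n$ through $\prod_v E(k_v)/n$ and use Lemma~\ref{L:alphaPreserveDivisibilite} rather than Lemma~\ref{L:fPreserveDivisibilite}. One must also be slightly careful that "preserves $n$-divisibility" is closed under post-composition with an injective map (it is, directly from the $A/m\to B/m$ characterisation in part~(3) of the Definition), since the product of local Kummer maps $\prod_v E(k_v)/n\to\prod_v H^1(k_v,E[n])$ is injective but not obviously a direct-summand inclusion place-by-place in a way we need. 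Everything else is bookkeeping with the Kummer sequence and functoriality.
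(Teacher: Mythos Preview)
Your overall architecture is exactly the paper's: feed the global and local Kummer sequences into Lemma~\ref{L:DiagrammeImpliqueDivisibilite} and verify hypotheses $(1)$--$(6)$. Hypotheses $(1)$--$(5)$ are handled correctly. The gap is in your verification of $(6)$, precisely at the point you flag as delicate.

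Your claim that ``preserves $n$-divisibility'' is closed under post-composition with an injective map is \emph{false}. The characterisation in part~(3) of the Definition says $f\colon A\to B$ preserves $n$-divisibility iff $A/m\to B/m$ is injective for each $m\mid n$; but an injective $g\colon B\to C$ need not induce an injective $B/m\to C/m$. Concretely, the inclusion $2\Z/4\Z\hookrightarrow\Z/4\Z$ is injective, yet with $m=2$ the induced map on $/2$-quotients is zero (the element $2$ is $2$-divisible in $\Z/4\Z$ but not in $2\Z/4\Z$). So knowing only that the product of local Kummer maps $f'\colon\prod_v E(k_v)/n\to\prod_v H^1(k_v,E[n])$ is injective does \emph{not} let you conclude that $f'\circ\alpha$ preserves $n$-divisibility.

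The fix is the one the paper uses: you need $f'$ itself to preserve $n$-divisibility, and this follows from Lemma~\ref{L:fPreserveDivisibilite}. The splitting of the Kummer sequence (from \cite[Proposition~6.7]{BKLPR15} or \cite{G-G}) holds over each $k_v$, so each local Kummer map $E(k_v)/n\to H^1(k_v,E[n])$ has image a direct summand, hence preserves $n$-divisibility by Lemma~\ref{L:EquivDivisibilite}; taking the product, $f'$ preserves $n$-divisibility. Now $\beta\circ f=f'\circ\alpha$ is a composite of two $n$-divisibility-preserving maps, which does preserve $n$-divisibility (composition of injections on $/m$-quotients). With this correction your proof coincides with the paper's.
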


\begin{proof}
Consider the commutative diagram
\[
\begin{CD}
E(k)/n @>f>>  H^1(k,E[n]) @>g>> H^1(k,E)\\
@V\alpha VV  @V\beta VV @V\gamma VV  \\
\prod_{v\in \Omega} E(k_v)/n  @>f'>> \prod_{v\in \Omega} H^1(k_v,E[n]) ) @>g'>> \prod_{v\in \Omega} H^1(k_v,E)
\end{CD}
\]
in which $f$ and $f'$ are the Kummer maps. We note that the first line fits into the Kummer exact sequence
$$
\begin{CD}
0 @>>> E(k)/n @>>>  H^1(k,E[n]) @>>> H^1(k,E)[n] @>>> 0. \\
\end{CD}
$$
In particular, $\ker g=\im f$ and $\im g=H^1(k,E)[n]$.

The kernel of $\beta$ is $\Sha^1(k,E[n])=0$, hence $\beta$ is an embedding. Lemma~\ref{L:gammaPreserveDivisibilite} implies that $\gamma$ preserves $n$-divisibility. By Lemma~\ref{L:alphaPreserveDivisibilite}, the map $\alpha$ preserves $n$-divisibility, and by Lemma~\ref{L:fPreserveDivisibilite}, the map $f'$ preserves $n$-divisibility. It follows, by composition of $n$-divisibility preserving maps, that $f'\circ \alpha$ preserves $n$-divisibility, hence by commutativity of the diagram above $\beta\circ f$ preserves $n$-divisibility. All conditions of Lemma~\ref{L:DiagrammeImpliqueDivisibilite} being satisfied, one concludes that the image of $\beta$ is a direct summand of the target group.
\end{proof}


\subsection{Proofs of the main results}


\begin{proof}[Proof of Theorem~\ref{T:Main}]
If $A\leq B\leq B_0$ are abelian groups such that $A$ is a direct summand of $B_0$, then $A$ is a direct summand of $B$: indeed, if $B_0=A\oplus C$, then  $B=A\oplus (C\cap B)$. Given the fact that $H^1(\A,E[n])$ is a subgroup of $\prod_{v\in \Omega} H^1(k_v,E[n])$, the result follows from Theorem~\ref{T:PreserveDivisibiliteLocalH1}.
\end{proof}

\begin{proof}[Proof of Corollary~\ref{C1}]
Let $k$ be a global field, and let $n>0$ be an integer. It follows from Proposition~6.1 and Remark~6.2 of \cite{BKLPR15} that the property
\begin{center}
$\Sha^1(k,E[m])=0$ for every $m\mid n$
\end{center}
holds for $100\%$ of elliptic curves $E$ defined over $k$.
But $E^t[m]\simeq E[m]$ by self-duality of elliptic curves, hence $\Sha^1(k,E^t[m])=0$ also holds for every $m\mid n$. The result then follows from Theorem~\ref{T:Main}.
\end{proof}

\begin{proof}[Proof of Corollary~\ref{C2}]
Let $E$ be an elliptic curve defined over $\Q$. According to \cite{PRV}, for every prime $p\geq 5$ and for every integer $r\geq 1$, $\Sha^1(\Q,E[p^r])=0$. The result follows from Theorem~\ref{T:Main} 
\end{proof}

\begin{remark}
\label{R:limit}
Let $k$ be a global field, let $p$ be a prime number, and let $E$ be an elliptic curve defined over $k$ such that $\Sha^1(k,E[p^r])=0$ for every $r>0$ (given $k$ and $p$, this holds for $100\%$ of $E$).
It follows from Theorem~\ref{T:Main} that, for each $r>0$, $\beta_{p^r}$ is injective and preserves divisibility. By passing to the direct limit when $r\to+\infty$, we get a map
$$
\beta_{p^\infty}:H^1(k,E[p^\infty])\longrightarrow H^1(\A,E[p^\infty])
$$
which is injective and preserves divisibility (in other terms, its image is a \emph{pure subgroup} of the target group). Nevertheless, we do not expect the image of $\beta_{p^\infty}$ to be a direct summand of the target group, because the retractions of the various maps $\beta_{p^r}$ have no reason to be compatible with each other.
\end{remark}


\section{Non-direct summands}


The aim of this section is to construct elliptic curves over number fields for which the conclusion of Theorem~\ref{T:Main} does not hold.

The following lemma allows one to obtain non injective morphisms whose image is not a direct summand of the target group. 

\begin{lemma}
\label{L:pasinjectif}
Consider a morphism $f:A\to B$ of $n$-torsion abelian groups. Suppose that there exists $m\mid n$ and $P\in A$ such that:
\begin{enumerate}
\item[(i)] $P$ is not $m$-divisible but $f(P)$ is $m$-divisible;
\item[(ii)] any element of $\ker(f)$ is $m$-divisible in $A$.
\end{enumerate}
Then $f(A)$ is not a direct summand of $B$. 
\end{lemma}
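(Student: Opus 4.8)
The plan is to prove the contrapositive-style characterization via Lemma~\ref{L:EquivDivisibilite}, but since $f$ is not assumed injective we cannot apply that lemma directly to $f$; instead I would apply it to the induced embedding on a suitable quotient. First I would set $A' := A/\ker(f)$, so that $f$ factors as $A \twoheadrightarrow A' \xhookrightarrow{\bar f} B$, and observe that $\bar f(A') = f(A)$; thus it suffices to show that $\bar f(A')$ is not a direct summand of $B$. Both $A'$ and $B$ are $n$-torsion, so by Lemma~\ref{L:EquivDivisibilite} it is enough to show that $\bar f$ does \emph{not} preserve $n$-divisibility, i.e.\ to exhibit an integer $m \mid n$ and an element $\bar P \in A'$ which is not $m$-divisible in $A'$ but whose image $\bar f(\bar P)$ is $m$-divisible in $B$.

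The natural candidate is of course $\bar P$, the image of the given $P$ under $A \twoheadrightarrow A'$. Hypothesis~(i) gives that $f(P) = \bar f(\bar P)$ is $m$-divisible in $B$, so the only thing to check is that $\bar P$ is not $m$-divisible in $A'$. Here is where hypothesis~(ii) enters: suppose for contradiction that $\bar P = m\bar Q$ for some $\bar Q \in A'$; lifting $\bar Q$ to $Q \in A$, this means $P - mQ \in \ker(f)$. By~(ii), $\ker(f)$ is $m$-divisible in $A$, so $P - mQ = mR$ for some $R \in A$, whence $P = m(Q+R)$ is $m$-divisible in $A$, contradicting~(i). Therefore $\bar P$ is not $m$-divisible in $A'$, as required.

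Having shown that $\bar f$ fails to preserve $n$-divisibility, Lemma~\ref{L:EquivDivisibilite} (applied to the embedding $\bar f : A' \to B$ of $n$-torsion groups, with the equivalence of~(1) and~(3)) yields that $\bar f(A') = f(A)$ is not a direct summand of $B$, which is the desired conclusion. I do not expect any serious obstacle here: the only subtlety is the bookkeeping around passing to the quotient $A/\ker(f)$, and in particular the use of hypothesis~(ii) precisely to transfer $m$-divisibility of $\bar P$ in the quotient back to $m$-divisibility of $P$ in $A$. One should be slightly careful to invoke Lemma~\ref{L:EquivDivisibilite} in the ``negated'' direction, i.e.\ via the equivalence $(1)\Leftrightarrow(3)$, rather than trying to produce an explicit obstruction to a splitting by hand.
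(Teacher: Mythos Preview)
Your proof is correct and follows essentially the same approach as the paper: pass to the induced embedding $\bar f:A/\ker(f)\hookrightarrow B$, use hypotheses~(i) and~(ii) to show that the image $\bar P$ of $P$ is not $m$-divisible in $A/\ker(f)$ while $\bar f(\bar P)$ is $m$-divisible in $B$, and conclude by Lemma~\ref{L:EquivDivisibilite}. You have simply made explicit the step (``from (i) and (ii) one deduces\ldots'') that the paper leaves to the reader.
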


\begin{proof}
Let $\overline{f}:A/\ker(f) \to B$ be the morphism induced by $f$. From (i) and (ii) one deduces that the image $\overline{P}$ of $P$ in $A/\ker(f)$ is not $m$-divisible in $A/\ker(f)$. On the other hand, $\overline{f}(\overline{P})=f(P)$ is $m$-divisible by hypothesis (i). As $\overline{f}$ is an embedding and does not preserve divisibility, by Lemma~\ref{L:EquivDivisibilite} we deduce that $f(A)=\overline{f}(A/\ker(f))$ is not a direct factor of $B$. 
\end{proof}

Following the tradition, if some finite group $G$ acts on some abelian group $M$, we define
$$
H^1_\cyc(G,M):= \ker\bigg(H^1(G,M)\longrightarrow \prod_{\substack{\Gamma\leq G \\ \Gamma \text{ cyclic}}} H^1(\Gamma,M)\bigg).
$$
(In some papers, this group is denoted by $H^1_{\mathrm{loc}}(G,M)$).

Let $k$ be a number field, and let $M$ be a finite $k$-group scheme (equivalently, a finite $\Gal(\bar{k}/k)$-module). Let us denote by $k(M)$ the smallest extension of $k$ over which $M$ becomes constant (equivalently, the Galois action becomes trivial over $k(M)$).

It follows from Chebotarev's density theorem (see \cite[Lemma~6.3]{BKLPR15}) that $\Sha^1(k,M)$ can be identified with a subgroup of
$$
H^1_\cyc(\Gal(k(M)/k),M).
$$

Moreover, if $k(M)/k$ is everywhere unramified, then all decomposition subgroups of $k(M)/k$ are cyclic, hence
$$
\Sha^1(k,M)=H^1_\cyc(\Gal(k(M)/k),M).
$$

The following lemma is a consequence of the construction of \cite[Section~5]{GR18}.

\begin{lemma}
\label{L:fg}
Let $p$ be a prime number such that $p\equiv 2\pmod{3}$.  Let
$$
H_2 = 
\bigg \{
\left(
\begin{array}{cc}
1 + p ( a - 2b ) & 3p ( b-a ) \\
-pb & 1 - p ( a - 2b ) \\
\end{array}
\right)  
\mid \ a, b \in \Z/ p^2 \Z
\bigg \}.
$$
Then $H_2$ is a subgroup of $\GL_2(\Z/p^2\Z)$. Let
$$
G_2 = \bigg\langle
\left( 
\begin{array}{cc} 
1 & -3 \\
1 & -2 \\
\end{array}
\right),
H_2
\bigg\rangle.
$$
Finally, let
$$
\pi:\GL_2(\Z/p^3\Z)\to \GL_2(\Z/p^2\Z)
$$
be the canonical surjection, and let  $G_3:=\pi^{-1}(G_2)$. Then
$$
H^1_\cyc(G_2,(\Z/p^2\Z)^2)=H^1_\cyc(G_3,(\Z/p^3\Z)^2)\neq 0.
$$
\end{lemma}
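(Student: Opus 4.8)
The plan is to compute the two cohomology groups $H^1_\cyc(G_2,(\Z/p^2\Z)^2)$ and $H^1_\cyc(G_3,(\Z/p^3\Z)^2)$ explicitly and to verify that both are nontrivial and that the natural inflation-type map between them is an isomorphism. The group $G_2$ is, by construction, the reduction mod $p^2$ of the Galois image considered in \cite[Section~5]{GR18}; so the first task is to recall from that reference the structure of $G_2$ (and hence of $G_3=\pi^{-1}(G_2)$) as an extension, and in particular to identify the element $g:=\left(\begin{smallmatrix}1&-3\\1&-2\end{smallmatrix}\right)$, whose characteristic polynomial is $t^2+t+1$, so that $g$ has order $3$ and acts on $(\Z/p^r\Z)^2$ without fixed points (here we use $p\equiv 2\pmod 3$, which guarantees $t^2+t+1$ is irreducible mod $p$, so $\langle g\rangle$ acts freely). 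The subgroup $H_2$ is a $p$-group (it visibly consists of matrices congruent to the identity mod $p$), and $G_2=\langle g\rangle\ltimes H_2$ after checking $g$ normalizes $H_2$; correspondingly $G_3$ is $\langle \tilde g\rangle\ltimes H_3$ for a lift $\tilde g$ and $H_3=\pi^{-1}(H_2)$, which is a $p$-group.

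Next I would reduce the computation of $H^1_\cyc$ to a computation over the $p$-Sylow subgroup. Since $[G_r:H_r]=3$ is prime to $p$ and $(\Z/p^r\Z)^2$ is a $p$-group, the restriction map $H^1(G_r,(\Z/p^r\Z)^2)\hookrightarrow H^1(H_r,(\Z/p^r\Z)^2)$ is injective with image the $\langle g\rangle$-invariants; moreover a class in $H^1(G_r,M)$ lies in $H^1_\cyc(G_r,M)$ if and only if its restriction to every cyclic subgroup vanishes, and because the only cyclic subgroups contributing obstructions after passing to the Sylow are the cyclic subgroups of $H_r$ (the subgroup $\langle g\rangle$ and its conjugates have order $3$ prime to $p$, so $H^1=0$ there automatically), we get
$$
H^1_\cyc(G_r,M)=\big(H^1_\cyc(H_r,M)\big)^{\langle g\rangle}.
$$
So everything comes down to understanding $H^1_\cyc(H_2,(\Z/p^2\Z)^2)$ and $H^1_\cyc(H_3,(\Z/p^3\Z)^2)$, together with the $\langle g\rangle$-action on them. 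This is precisely the kind of explicit cocycle computation carried out in \cite{GR18}; I would either quote the relevant output of that construction directly, or redo it: write $H_r$ in terms of generators (it is generated mod $p$-th powers by two elements corresponding to the parameters $a,b$), compute $1$-cocycles $H_r\to M$ by the relations, impose the condition that the cocycle is a coboundary on every cyclic subgroup, and solve the resulting linear system over $\Z/p^r\Z$. The equality $p\equiv 2\pmod 3$ is used again here to make a certain $3\times 3$ determinant (coming from the action of $g$) a unit.

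Finally, I would compare the two groups via the canonical surjection $\pi$. There is an inflation map $H^1_\cyc(G_2,(\Z/p^2\Z)^2)\to H^1_\cyc(G_3,(\Z/p^3\Z)^2)$ — or more precisely, a map induced by $\pi$ and by the inclusion $(\Z/p^2\Z)^2\hookrightarrow (\Z/p^3\Z)^2$ sending $x\mapsto px\cdot(\text{something})$, or by reduction; the correct comparison is the one arising in the proof of Theorem~\ref{T:contreexemple}, where $H^1_\cyc$ of the mod $p^3$ representation needs to be detected already at level $p^2$. I would show this map is an isomorphism by a dimension count: both groups are $\mathbb{F}_p$-vector spaces (being $p$-torsion, since the classes turn out to be killed by $p$), and one checks surjectivity because any cyclic-locally-trivial cocycle of $G_3$ factors through $G_2$ after multiplication by $p$, while the $H_3$-versus-$H_2$ computation shows no new classes appear at level $3$. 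The nonvanishing $\neq 0$ then follows from exhibiting one explicit nonzero class, e.g. the cocycle built in \cite[Section~5]{GR18} from the noncyclic structure of $H_2$.

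The main obstacle I anticipate is the bookkeeping in the explicit cocycle computation over $H_r$: one must present $H_r$ by generators and relations, parametrize $Z^1(H_r,M)$, and then intersect the coboundary conditions over all (infinitely many, but finitely many up to the relevant equivalence) cyclic subgroups — the subtlety being that $H_r$ is nonabelian of exponent dividing $p^2$ (resp.\ $p^3$), so its cyclic subgroups are not all conjugate and the "local triviality" condition is genuinely several conditions. Keeping track of which conditions are independent, and verifying that the resulting linear algebra over $\Z/p^r\Z$ behaves uniformly in $p$ (for $p$ large and $p\equiv 2\pmod 3$), is where the real work lies; the comparison map $r=2$ versus $r=3$ and the $\langle g\rangle$-invariance are comparatively formal once the $H_r$-computation is in hand.
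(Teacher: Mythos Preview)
Your approach is genuinely different from the paper's, and while the Sylow reduction $H^1_\cyc(G_r,M)\simeq \big(H^1_\cyc(H_r,M)\big)^{\langle g\rangle}$ is correct and useful, the comparison step between $r=2$ and $r=3$ is where your sketch becomes vague. You gesture at an ``inflation-type map'' and a ``dimension count'', but you never pin down why the map is surjective onto $H^1_\cyc(G_3,M_3)$; the sentence ``any cyclic-locally-trivial cocycle of $G_3$ factors through $G_2$ after multiplication by $p$'' is not quite the right mechanism. The paper handles this cleanly via the inflation--restriction sequence for $N:=\ker\pi\trianglelefteq G_3$ with quotient $G_2$, using $M_3^N=M_2$: one shows $H^1_\cyc(N,M_3)=0$ (any locally trivial cocycle on $N$ takes values in $p^2M_3$, and then a Dvornicich--Zannier-type argument finishes), which forces $H^1_\cyc(G_3,M_3)\subseteq\im(\mathrm{inf})$.

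The other direction, that inflation sends $H^1_\cyc(G_2,M_2)$ \emph{into} $H^1_\cyc(G_3,M_3)$, is automatic. But for the equality the paper proves something much stronger than you anticipate: for \emph{every} cyclic $\Gamma\leq G_2$ one has $H^1(\Gamma,M_2)=0$. (Reduce to $\Gamma=\langle h\rangle$ with $h\in H_2$; then $h-\1=pM_{a,b}$ with $M_{a,b}$ invertible, so $(h-\1)M_2=pM_2$, while $T_h=p\cdot\1$ has the same kernel.) This gives $H^1(G_2,M_2)=H^1_\cyc(G_2,M_2)$, so the inflation image is exactly $H^1_\cyc(G_2,M_2)$, and the equality with $H^1_\cyc(G_3,M_3)$ follows immediately. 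This stronger vanishing also short-circuits the laborious cocycle bookkeeping you flag as the ``main obstacle'': there is no need to parametrize $Z^1(H_r,M)$ and intersect coboundary conditions over all cyclic subgroups, because at level $r=2$ the local-triviality condition is vacuous, and level $r=3$ is never computed directly. The nonvanishing itself is quoted from \cite{GR18}, as you suggest.
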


\begin{proof}
In \cite[Section~5]{GR18}, it was proved that
$$
H^1_\cyc(G_2,(\Z/p^2\Z)^2)\neq 0.
$$

Let $N:=\ker(\pi)$ be the subgroup of matrices congruent to identity modulo $p^2$. By construction, we have an exact sequence
$$
\begin{CD}
1 @>>> N @>>> G_3 @>>> G_2 @>>> 1.\\
\end{CD}
$$

Let $M_3:=(\Z/p^3\Z)^2$ and $M_2=M_3[p^2]$. Then $M_3^N=M_2$ and the $G_2$-module $M_2$ is isomorphic to the $G_2$-module $(\Z/p^2\Z)^2$ induced by the natural matrix action. The inflation-restriction exact sequence reads
\begin{equation}
\label{eq:infres}
\begin{CD}
0 @>>> H^1(G_2,M_2) @>\mathrm{inf}>> H^1(G_3,M_3) @>\mathrm{res}>> H^1(N,M_3)^{G_2}.
\end{CD}
\end{equation}

Let us prove that $H^1_\cyc(N,M_3)=0$. Let $Z:N\to M_3$ be a cocycle whose class belongs to $H^1_\cyc(N,M_3)$, then for each $h\in N$ there exists $m\in M_3$ such that $Z(h)=(h-\1)m$ (where $\1$ denotes the identity). Since $h-\1\equiv 0\pmod{p^2}$, we deduce that $Z(h)$ belongs to $p^2M_3$. Since $N$ acts trivially on $p^2M_3$, the cocycle $Z$ is in fact a morphism $N\to p^2 M_3$, and the result follows by the same argument as in \cite[Proposition~3.2]{DZ01}.

Since the restriction map sends $H^1_\cyc(G_3,M_3)$ to $H^1_\cyc(N,M_3)$, we deduce from the exact sequence \eqref{eq:infres} that $H^1_\cyc(G_3,M_3)$ belongs to the image of the inflation.

We claim that, for every cyclic subgroup $\Gamma\leq G_2$, $H^1(\Gamma,M_2)=0$. This implies that $H^1(G_2, M_2)=H^1_\cyc(G_2, M_2)$, hence one can deduce from the previous discussion, combined with the fact that the inflation sends $H^1_\cyc ( G_2, M_2 )$ to $H^1_\cyc ( G_3 , M_3 )$, that
$$
H^1_\cyc(G_2,(\Z/p^2\Z)^2)=H^1_\cyc(G_3,(\Z/p^3\Z)^2).
$$
 
Let
$$
g:=\left( 
\begin{array}{cc} 
1 & -3 \\
1 & -2 \\
\end{array}
\right)
$$
which has order $3$. Because $gH_2=H_2g$ (see \cite[Section~5]{GR18}), $H_2$ is a normal subgroup of $G_2$, and we have an exact sequence
$$
\begin{CD}
1 @>>> H_2 @>>> G_2 @>>> \langle g\rangle @>>> 1. \\
\end{CD}
$$

As $p\neq 3$, given a cyclic subgroup $\Gamma\leq G_2$, its $p$-Sylow subgroup is a subgroup of $H_2$, hence of the form $\langle h\rangle$ for some $h\in H_2$, and we have an embedding
$$
H^1(\Gamma,M_2) \hookrightarrow H^1(\langle h\rangle,M_2).
$$

We shall use the following classical fact: if $\gamma\in \GL_2(\Z/p^2\Z)$ is an element of order $r$, then the cohomology of the cyclic group $\langle\gamma \rangle$ acting on $M_2$ can be computed as
\begin{equation}
\label{cyclicH1}
H^1(\langle\gamma \rangle,M_2) = \ker(T_{\gamma})/(\gamma-\1)M_2
\end{equation}
where $T_{\gamma}$ denotes the matrix $\1+\gamma+\gamma^2+\cdots +\gamma^{r-1}$.

Let $h$ be any element of $H_2$ distinct from the identity. Then
$$
h - \1 = p\left(
\begin{array}{cc}
a - 2b & 3(b-a) \\
-b & -(a - 2b) \\
\end{array}
\right)=:pM_{a,b}
$$
One checks that $M_{a,b}$ is an invertible matrix (this calculation was made in details \cite[Section 5]{GR18}).
Then, since $M_{a,b}$ is invertible, the image of $h- \1$ is $pM_2=M_2[p]$. On the other hand, a simple calculation shows that
$$
T_h=\1 + h + \ldots + h^{p-1}=p\cdot\1
$$
whose kernel is $M_2[p]$. It follows from \eqref{cyclicH1} that $H^1(\langle h \rangle, M_2) = 0$, which concludes the proof.
\end{proof}


\begin{lemma}
\label{L:chbase1}
Let $E$ be an elliptic curve over a number field $k$, without complex multiplication, and let $p\equiv 2\pmod{3}$ be a large enough prime number. Let $K_2:=k(E[p^2])$ and $K_3:=k(E[p^3])$. Then there exists a number field $F/k$ with $E(F)[p]=0$, such that the morphism
$$
H^1_\cyc(\Gal(FK_2/F),E[p^2])\longrightarrow H^1_\cyc(\Gal(FK_3/F),E[p^3])
$$
is an isomorphism, and these groups are nonzero.
\end{lemma}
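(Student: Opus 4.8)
The plan is to realise the pair $(G_2,G_3)$ from Lemma~\ref{L:fg} as the pair of Galois groups $\bigl(\Gal(FK_2/F),\Gal(FK_3/F)\bigr)$ for a well-chosen subfield $F$ of $K_3$, and then to quote Lemma~\ref{L:fg}. The preliminary point is to determine $\Gal(K_3/k)$. Since $E$ has no complex multiplication, Serre's open image theorem gives that the image of $\rho_{p^\infty}\colon\Gal(\bar k/k)\to\GL_2(\Z_p)$ is open, and that for all but finitely many $p$ it contains $\mathrm{SL}_2(\Z_p)$. Moreover, for $p$ unramified in $k$ one has $k\cap\Q(\mu_{p^\infty})=\Q$ (a subextension of $k/\Q$ contained in the field $\Q(\mu_{p^\infty})$, which is totally ramified at $p$, must be trivial), so the cyclotomic character $\Gal(\bar k/k)\to\Z_p^\times$, which is $\det\circ\rho_{p^\infty}$, is surjective. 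A subgroup of $\GL_2(\Z/p^n\Z)$ containing $\mathrm{SL}_2(\Z/p^n\Z)$ and having full determinant equals $\GL_2(\Z/p^n\Z)$; hence, for $p$ large enough, $\Gal(k(E[p^n])/k)=\GL_2(\Z/p^n\Z)$ for every $n$. In particular we identify $\Gal(K_3/k)=\GL_2(\Z/p^3\Z)$ and $\Gal(K_2/k)=\GL_2(\Z/p^2\Z)$, compatibly with $\pi$ and with the matrix action on $E[p^3]$ and $E[p^2]$ in a fixed basis.

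Next I would set $F:=K_3^{G_3}$, the fixed field of $G_3\le\GL_2(\Z/p^3\Z)=\Gal(K_3/k)$. Then $k\subseteq F\subseteq K_3$, and in fact $F\subseteq K_2$ since $G_3=\pi^{-1}(G_2)$ contains $N:=\ker\pi=\Gal(K_3/K_2)$. As $F\subseteq K_3$ we have $FK_3=K_3$, hence $\Gal(FK_3/F)=G_3$ acting on $E[p^3]\cong(\Z/p^3\Z)^2$ by matrices; and $\Gal(K_3/FK_2)=G_3\cap N=N$, so $FK_2=K_2$ and $\Gal(FK_2/F)=G_3/N=G_2$ acting on $E[p^2]=E[p^3][p^2]\cong(\Z/p^2\Z)^2$ by matrices. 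The map in the statement is inflation along the surjection $G_3\twoheadrightarrow G_2$ followed by the map induced by the inclusion $E[p^2]=E[p^3][p^2]\hookrightarrow E[p^3]$; this is precisely the inflation map $H^1(G_2,M_2)\to H^1(G_3,M_3)$ (with $M_2=M_3[p^2]$) studied in the proof of Lemma~\ref{L:fg}, which is shown there to induce an isomorphism between $H^1_\cyc(G_2,(\Z/p^2\Z)^2)$ and $H^1_\cyc(G_3,(\Z/p^3\Z)^2)$; and those groups are nonzero by Lemma~\ref{L:fg}.

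It then remains to verify $E(F)[p]=0$. The action of $\Gal(\bar k/F)$ on $E[p]$ factors through $\Gal(K_3/F)=G_3$ and then through its reduction modulo $p$, which coincides with the reduction modulo $p$ of $G_2=\pi(G_3)$. Every matrix of $H_2$ reduces to the identity modulo $p$, so this reduction is the group generated by the reduction $\bar g$ of $\begin{pmatrix}1&-3\\1&-2\end{pmatrix}$ modulo $p$; its characteristic polynomial is $t^2+t+1$, which is irreducible over $\mathbb{F}_p$ because $p\equiv 2\pmod 3$ forces $\mathbb{F}_p$ to contain no primitive cube root of unity. Hence $\bar g$ has no nonzero fixed vector and $E(F)[p]=E[p]^{\langle\bar g\rangle}=0$.

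The genuinely non-formal ingredient here is the input from Serre that forces $\Gal(K_3/k)=\GL_2(\Z/p^3\Z)$ for $p$ large; over a general number field this needs both the containment of $\mathrm{SL}_2(\Z_p)$ in the image and the surjectivity of the cyclotomic character, the latter being the reason we also discard the finitely many $p$ ramified in $k$. Everything after that is elementary Galois theory together with Lemma~\ref{L:fg}, and the identification of the comparison map with the inflation map of that lemma is immediate from the definitions; so the main obstacle is essentially in citing the open-image results in the precise uniform form that is needed.
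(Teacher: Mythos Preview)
Your proof is correct and follows exactly the paper's approach: set $F=K_3^{G_3}$, identify $\Gal(FK_i/F)$ with $G_i$ and the comparison map with the inflation map of Lemma~\ref{L:fg}, then invoke that lemma. The only differences are cosmetic: you justify surjectivity of $\rho_{p^\infty}$ by separating the $\mathrm{SL}_2$-part from the determinant (the paper cites Serre directly), and you verify $E(F)[p]=0$ via the irreducibility of $t^2+t+1$ over $\mathbb{F}_p$ (the paper just asserts $M_3^{G_3}=0$).
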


\begin{proof}
As $E$ has no complex multiplication, it follows from \cite{serre72} that, for $p$ large enough, the Galois representation $\Gal(\bar{k}/k)\to\GL_2(\Z_p)$ associated to the $p$-adic Tate module of $E$ is surjective. In particular, if $p$ is large enough, $\Gal(K_3/k)\simeq\GL_2(\Z/p^3\Z)$ and the natural action of $\Gal(K_3/k)$ on $E[p^3]$ can be identified with the action of $\GL_2(\Z/p^3\Z)$ on $M_3=(\Z/p^3\Z)^2$. Let $G_2$ and $G_3$ be the groups from Lemma~\ref{L:fg}, and let $F:=K_3^{G_3}$ be the field fixed by $G_3$. Then by elementary Galois theory $\Gal(FK_3/F)=G_3$ which acts on $E[p^3]$ as on $M_3$. It follows that $\Gal(FK_2/F)=G_2$.
Moreover, $G_3$ does not fix any point of $M_3$, hence $E(F)[p]=0$. The result follows from the conclusion of Lemma~\ref{L:fg}.
\end{proof}

\begin{lemma}
\label{L:chbase2}
Let $E$ be an elliptic curve over a number field $k$. Let $p$ be a prime number, let $n\geq 2$ be an integer, and let $K_n:=k(E[p^n])$. Then there exists a number field $L/k$ such that
\begin{enumerate}
\item[(a)] $L$ is linearly disjoint from $K_n$ over $k$;
\item[(b)] $LK_n/L$ is everywhere unramified;
\item[(c)] $H^1_\cyc(\Gal(LK_n/L),E[p^n])$ is contained in the image of the Kummer map $\delta_{p^n}:E(L)/p^n\to H^1(L,E[p^n])$.
\end{enumerate}
\end{lemma}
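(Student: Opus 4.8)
The plan is to make the torsors attached to the classes of $H^1_\cyc(\Gal(K_n/k),E[p^n])$ acquire rational points over a suitable base extension $L$, which forces those classes into the image of the Kummer map. Write $G:=\Gal(K_n/k)$, acting faithfully on $E[p^n]$, and let $V:=H^1_\cyc(G,E[p^n])$, a finite group. Inflation embeds $V$ into $H^1(k,E[p^n])$, and composing with the boundary map $H^1(k,E[p^n])\to H^1(k,E)$ of the Kummer sequence attaches to each $c\in V$ a torsor $X_c$ under $E$ --- a smooth projective genus-$1$ curve over $k$ with Jacobian $E$; since $c$ is inflated from $G$ it restricts to $0$ over $K_n$, so $X_c$ has a $K_n$-rational point. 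Fix generators $c_1,\dots,c_r$ of $V$ and the associated curves $X_1,\dots,X_r$. It suffices to produce a number field $L/k$ satisfying (a), (b) and with $X_i(L)\neq\emptyset$ for every $i$: by (a) the restriction map $\Gal(LK_n/L)\to G$ is an isomorphism respecting the action on $E[p^n]$, so under inflation $H^1_\cyc(\Gal(LK_n/L),E[p^n])$ identifies with the subgroup of $H^1(L,E[p^n])$ generated by the images of the $c_i$; and $X_i(L)\neq\emptyset$ says exactly that $c_i$ lies in $\ker\bigl(H^1(L,E[p^n])\to H^1(L,E)\bigr)=\im\delta_{p^n}$, a subgroup. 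This yields (c).

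The first and main step is to construct a number field $k'/k$, linearly disjoint from $K_n$ over $k$, such that $k'K_n/k'$ is everywhere unramified. Let $S$ be the finite set of places of $k$ ramified in $K_n$ (the archimedean places, the places above $p$, and the places of bad reduction of $E$); note that $K_n/k$ is in general wildly ramified above $p$. For each $v\in S$ one can absorb the ramification locally: there is a finite extension $M_v/k_v$ with $M_v\cdot(K_n)_{\tilde v}$ unramified over $M_v$ (one may take $M_v=(K_n)_{\tilde v}$). The task is then to realise all the $M_v$ at places above $S$ by a single global extension $k'/k$ while keeping $k'$ linearly disjoint from $K_n$. That a global extension with prescribed completions at finitely many places exists is standard (weak approximation applied to a primitive element, or Grunwald--Wang); the subtle point --- and the main obstacle --- is the linear disjointness, which cannot be read off from coprimality of degrees, because the wild ramification above $p$ forces $p\mid[k':k]$ and $p\mid|G|$. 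I would secure it by building $k'$ inside a solvable extension of $k$, imposing the above conditions at $S$ together with an extra condition at an auxiliary place $q$ chosen to split completely in $K_n$, so arranged that no nontrivial subextension of $K_n$ can embed into $k'$, i.e.\ $k'\cap K_n=k$; the exceptional cases of Grunwald--Wang are avoided by using odd auxiliary primes.

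Granting $k'$, the rest is routine. I use repeatedly that if $M/F$ is everywhere unramified and $F'/F$ is any finite extension then $MF'/F'$ is everywhere unramified (the compositum of an unramified local extension with anything stays unramified), together with the fact that a smooth projective curve over a number field has closed points whose residue field is linearly disjoint from any prescribed finite extension (by Hilbert irreducibility, the closed points for which this fails form a thin set). Set $k_{(1)}:=k'$ and enlarge $k'$ one torsor at a time: given $k_{(i)}\supseteq k'$ linearly disjoint from $K_n$ over $k$, with $k_{(i)}K_n/k_{(i)}$ everywhere unramified and $X_1,\dots,X_{i-1}$ already having $k_{(i)}$-rational points, pick a closed point $x_i$ of $X_i\times_k k_{(i)}$ whose residue field $k_{(i+1)}:=\kappa(x_i)$ is linearly disjoint from $k_{(i)}K_n$ over $k_{(i)}$. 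Then $k_{(i+1)}\cap K_n\subseteq k_{(i+1)}\cap k_{(i)}K_n=k_{(i)}$, hence $k_{(i+1)}\cap K_n\subseteq k_{(i)}\cap K_n=k$; thus $k_{(i+1)}$ is still linearly disjoint from $K_n$ over $k$, the extension $k_{(i+1)}K_n/k_{(i+1)}$ is still everywhere unramified by the base-change remark, $X_i(k_{(i+1)})\neq\emptyset$, and $X_1,\dots,X_{i-1}$ retain their rational points. After $r$ steps, $L:=k_{(r+1)}$ satisfies (a), (b) and (c).
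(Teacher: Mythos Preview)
Your argument is sound and, at the structural level, matches the paper's: both reduce to a finite list of classes in $H^1_\cyc(G,E[p^n])$ and build $L$ by finite induction so that each class lands in $\im\delta_{p^n}$ while (a) and (b) are preserved. The difference is that the paper's proof is essentially a citation: it invokes \cite[Corollary~1]{DZ07}, which already furnishes, for a \emph{single} $\xi\in H^1_\cyc(\Gal(K_n/k),E[p^n])$, infinitely many $L/k$ satisfying (a), (b) and $\xi\in\im\delta_{p^n}$; the induction is then immediate. What you have written is in effect a self-contained proof of that corollary, split into two phases --- first absorb the ramification of $K_n/k$ by a disjoint base change $k'$, then trivialise the torsors $X_i$ one by one via residue fields of well-chosen closed points.

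Two steps deserve tighter justification. In the construction of $k'$, the role of the auxiliary place $q$ should be made explicit: taking $q$ split in $K_n$ and requiring $q$ \emph{inert} in $k'$ works, since any $F\subseteq k'\cap K_n$ then has $F\otimes_k k_q\cong k_q^{[F:k]}$ embedding into the field $k'\otimes_k k_q$, forcing $[F:k]=1$; combined with weak approximation and Krasner this yields $k'$. In the iterative step, the phrase ``the closed points for which this fails form a thin set'' is imprecise (closed points are not rational points of anything); the clean statement is that for any nonconstant $f:X_i\to\mathbb{P}^1_{k_{(i)}}$, applying Hilbert irreducibility simultaneously to $X_i\to\mathbb{P}^1_{k_{(i)}}$ and to the irreducible cover $X_i\times_{k_{(i)}}k_{(i)}K_n\to\mathbb{P}^1_{k_{(i)}}$ (irreducible because $X_i$, being an $E$-torsor, is geometrically irreducible) shows that for $t\in k_{(i)}$ outside a thin set the fibre $f^{-1}(t)$ is a single closed point with $\kappa(f^{-1}(t))\otimes_{k_{(i)}}k_{(i)}K_n$ a field, i.e.\ with residue field linearly disjoint from $k_{(i)}K_n$. (Your remark that $X_c(K_n)\neq\emptyset$ is correct but never used.) With these details supplied your proof goes through; the paper simply outsources this work to \cite{DZ07}.
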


\begin{proof}
According to \cite[Corollary~1]{DZ07}, given $\xi\in H^1_\cyc(\Gal(K_n/k),E[p^n])$ there exists infinitely many extensions  $L/k$ such that
$( a )$ and $( b )$ hold, and $\xi$ belongs to the image of $\delta_{p^n}$. We note that $\Gal(LK_n/L)=\Gal(K_n/k)$, hence
$$
H^1_\cyc(\Gal(LK_n/L),E[p^n])\simeq H^1_\cyc(\Gal(K_n/k),E[p^n]).
$$
The result follows by finite induction, the group $H^1_\cyc(\Gal(K_n/k),E[p^n])$ being finite.
\end{proof}

\begin{proof}[Proof of Theorem~\ref{T:contreexemple}]
Let $K_2:=k(E[p^2])$ and $K_3:=k(E[p^3])$. According to Lemma~\ref{L:chbase1}, if $p$ is large enough, there exists some finite $F/k$ with $E(F)[p]=0$, such that
\begin{equation}
\label{eq:egalite23F}
H^1_\cyc(\Gal(FK_2/F),E[p^2])\simeq H^1_\cyc(\Gal(FK_3/F),E[p^3])\neq 0.
\end{equation}

According to Lemma~\ref{L:chbase2}, there exists $L/F$ such that
\begin{enumerate}
\item[(a)] $L$ is linearly disjoint from $FK_3$ over $F$;
\item[(b)] $LK_3/L$ is everywhere unramified;
\item[(c)] $H^1_\cyc(\Gal(LK_3/L),E[p^3])$ is contained in the image of the Kummer map $\delta_{p^3}:E(L)/p^3\to H^1(L,E[p^3])$.
\end{enumerate}

It follows from (a) that $\Gal(LK_3/L)=\Gal(FK_3/F)$ and similarly for $\Gal(LK_2/L)$, hence \eqref{eq:egalite23F} becomes
\begin{equation}
\label{eq:egalite23}
H^1_\cyc(\Gal(LK_2/L),E[p^2])\simeq H^1_\cyc(\Gal(LK_3/L),E[p^3])\neq 0.
\end{equation}

For the same reason, $E(L)[p]=E(F)[p]=0$.

Let us note that, since $LK_3/L$ is everywhere unramified by (b), 
\begin{equation}
\label{eq:H1cyc=sha}
\Sha^1(L,E[p^3])=H^1_\cyc(\Gal(LK_3/L),E[p^3]).
\end{equation}

The same holds for $\Sha^1(L,E[p^2])$ because $K_2L/L$ is a subextension of $K_3L/L$, hence is everywhere unramified. It follows from \eqref{eq:egalite23} that
\begin{equation}
\label{eq:sha2=sha3}
\Sha^1(L,E[p^2]) \simeq \Sha^1(L,E[p^3]).
\end{equation}

From \eqref{eq:egalite23} and \eqref{eq:H1cyc=sha} we deduce that $\Sha^1(L,E[p^3])\neq 0$, hence the map
$$
\beta_{p^3}: H^1(L,E[p^3]) \longrightarrow H^1(\A_L,E[p^3])
$$
is not injective. We shall prove that the hypotheses of Lemma~\ref{L:pasinjectif} hold for this map when taking $m=p^2$, which proves the result.

By Kummer theory, we have the following commutative diagram, with exact lines
$$
\begin{CD}
E(L)/p^2 @>\delta_{p^2}>> H^1(L,E[p^2])@>>> H^1(L,E)[p^2]\\
@VVV @VVV @VVV \\
E(L)/p^3 @>\delta_{p^3}>> H^1(L,E[p^3])@>>> H^1(L,E)[p^3].\\
\end{CD}
$$

From (c), we know that  $H^1_\cyc(\Gal(LK_3/L),E[p^3])$ is contained in the image of $\delta_{p^3}$, which is the same as the kernel of the lower right-hand side map.

On the other hand, the vertical map in the middle sends the subgroup $H^1_\cyc(\Gal(LK_2/L),E[p^2])$ to the subgroup $H^1_\cyc(\Gal(LK_3/L),E[p^3])$, and the vertical map on the right is injective (inclusion of subgroups). One deduces that $H^1_\cyc (\Gal(LK_2/L, E[p^2])$ is contained in the kernel of the upper right-hand side map, \emph{i.e.} the image of $\delta_{p^2}$. As
$$
\Sha^1(L,E[p^2])=H^1_\cyc(\Gal(LK_2/L),E[p^2])\neq 0
$$
we deduce that there exists a point $P\in E(L)$ which is everywhere locally $p^2$-divisible, but is not $p^2$-divisible. Then $\beta_{p^3}(\delta_{p^3}(P))$ is $p^2$-divisible in $H^1(\A_L,E[p^3])$ by construction. On the other hand, $\delta_{p^3}(P)$ is not $p^2$-divisible in $H^1(L,E[p^3])$, because $\delta_{p^3}$ preserves divisibility by Lemma~\ref{L:fPreserveDivisibilite}. Hence, the condition (i) from Lemma~\ref{L:pasinjectif} is satisfied.

It follows from the Mordell-Weil theorem and the fact that $E(L)$ has no $p$-torsion that
$$
E(L)/p^3\simeq (\Z/p^3\Z)^r
$$
where $r$ denotes the rank of $E$ over $L$.

We claim that the group $\Sha^1(L,E[p^2])$ is $p$-torsion. Indeed, any element of $\Sha^1(L,E[p^2])$ is of the form $\delta_{p^2}(P)$ for some $P\in E(L)$ which is locally $p^2$-divisible. Such a $P$ is locally $p$-divisible, hence is $p$-divisible, because $\Sha^1(L,E[p])=0$ (this results holds for any elliptic curve and any prime $p$, see \cite[Theorem~3.1]{DZ01}). Therefore, $P=pQ$ for some $Q\in E(L)$, hence $pP=p^2Q$ which is zero in $E(L)/p^2$, and the claim follows.

According to \eqref{eq:sha2=sha3}, it follows that any element of $\Sha^1(L,E[p^3])$ is $p$-torsion, hence is $p^2$-divisible in $E(L)/p^3 \simeq (\Z/p^3\Z)^r$. Consequently, any element of $\Sha^1(L,E[p^3])=\ker(\beta_{p^3})$ is $p^2$-divisible in $H^1(L,E[p^3])$, hence assumption (ii) of Lemma~\ref{L:pasinjectif} is satisfied.
\end{proof}

\begin{remark}
\begin{enumerate}
\item[a)] When going through our proof one can see that, in the setting of Theorem~\ref{T:contreexemple}, the natural map 
$$
E(L)/p^3 \to \prod_{v\in\Omega_L} E(L_v)/p^3
$$
is not injective, and its image is not a direct summand of the target group.
\item[b)] One can easily generalize the constructions of Theorem~\ref{T:contreexemple} by replacing $p^3$ by $p^r$ with $r\geq 3$. This yields examples for which $\beta_{p^r}$ is not injective and its image is not a direct summand of the target group.
\end{enumerate}
\end{remark}





\bigskip

Florence Gillibert and Gabriele Ranieri, Instituto de Matem{\'a}ticas, Pontificia Universidad Cat{\'o}lica de
Valpara{\'i}so, Blanco Viel 596, Cerro Bar{\'o}n, Valpara{\'i}so, Chile.

\emph{Email addresses:} \texttt{florence.gillibert@pucv.cl}, \texttt{gabriele.ranieri@pucv.cl}
\medskip

Jean Gillibert, Institut de Math{\'e}matiques de Toulouse, CNRS UMR 5219, 118, route de Narbonne, 31062 Toulouse Cedex 9, France.

\emph{Email address:} \texttt{jean.gillibert@math.univ-toulouse.fr}
\medskip

Pierre Gillibert, Institut f{\"u}r Diskrete Mathematik \& Geometrie, Technische Universit{\"a}t Wien, Wien,  {\"O}sterreich.

\emph{Email address:} \texttt{gillibert.pierre@tuwien.ac.at}, \texttt{pgillibert@yahoo.fr}



\begin{thebibliography}{99}


\bibitem{BKLPR15}
\textsc{Manjul Bhargava}, \textsc{Daniel M. Kane}, \textsc{Hendrik W. Lenstra Jr.}, \textsc{Bjorn Poonen} and \textsc{Eric Rains},
\emph{Modeling the distribution of ranks, Selmer groups, and Shafarevich-Tate groups of elliptic curves},
Camb. J. Math. \textbf{3} (2015), no. 3, 275--321.


\bibitem{ces15}
\textsc{K\k{e}stutis \v{C}esnavi\v{c}ius}, \textit{Poitou-{T}ate without restrictions on the order},
Math. Res. Letters \textbf{22} (2015), no. 6, 1621--1666.


\bibitem{CV17}
\textsc{Brendan Creutz} and \textsc{Felipe Voloch},
\textit{Local-global principles for Weil-Ch{\^a}telet divisibility in positive characteristic},
Math. Proc. Cambridge Philos. Soc. \textbf{163} (2017), no. 2, 357--367.


\bibitem{DZ01}  \textsc{Roberto Dvornicich} and \textsc{Umberto Zannier},
\textit{Local-global divisibility of rational points in some commutative algebraic groups},
Bull. Soc. Math. France, \textbf{129} (2001), 317-338.


\bibitem{DZ07}  \textsc{Roberto Dvornicich} and \textsc{Umberto Zannier},
\textit{On a local-global principle for the divisibility of a rational point by a positive integer},
Bull. London Math. Soc. \textbf{39} (2007) 27--34.


\bibitem{fuchs70}
\textsc{L\'aszl\'o Fuchs},
\emph{Infinite {A}belian {G}roups. Vol. I},
Pure and Applied Mathematics, Vol.~\textbf{36} (Academic Press, New York, 1970), xi+290p.


\bibitem{GR18} \textsc{Florence Gillibert} and \textsc{Gabriele Ranieri},
\textit{On the local-global divisibility over abelian varieties},
Ann. Inst. Fourier Grenoble, \textbf{68} (2018), 847--873.


\bibitem{G-G}
\textsc{Jean Gillibert} and \textsc{Pierre Gillibert},
\emph{On the splitting of the Kummer exact sequence}, preprint, \texttt{arXiv:1705.08195}.


\bibitem{GA09}
\textsc{Cristian D. Gonz{\'a}lez-Avil{\'e}s},
\textit{Arithmetic duality theorems for 1-motives over function fields},
J. Reine Angew. Math. \textbf{632} (2009), 203--231.


\bibitem{PRV}
\textsc{Laura Paladino}, \textsc{Gabriele Ranieri} and \textsc{Evelina Viada},
\textit{On the minimal set for counterexamples to the local-global principle},
Journal of Algebra \textbf{415} (2014), 290--304.


\bibitem{PR12}
\textsc{Bjorn Poonen} and \textsc{Eric Rains},
\textit{Random maximal isotropic subspaces and Selmer groups},
J. Amer. Math. Soc. \textbf{25} (2012), no. 1, 245--269,


\bibitem{serre72}
\textsc{Jean-Pierre Serre},
\textit{Propri{\'e}t{\'e}s galoisiennes des points d'ordre fini des courbes elliptiques},
Invent. Math. \textbf{15} (1972), 259--331.


\bibitem{tate63}
\textsc{John T. Tate},
\textit{Duality theorems in Galois cohomology over number fields}, 
Proc. Internat. Congr. Math. (Stockholm, 1962) (Inst. Mittag-Leffler, Djursholm, 1963), 288--295.


\end{thebibliography}
\end{document}